\documentclass[12pt]{amsart}

\usepackage{latexsym}
\usepackage{amsmath}
\usepackage{amssymb}
\usepackage{amscd}
\usepackage{amsthm}
\usepackage{xypic}
\xyoption{curve}
\usepackage{ifthen} 
\usepackage{hyperref} 
\usepackage{graphicx}

\theoremstyle{plain}
 \newtheorem{theorem}{Theorem}[section]
 \newtheorem{proposition}[theorem]{Proposition}
 \newtheorem{lemma}[theorem]{Lemma}
 \newtheorem{corollary}[theorem]{Corollary}
 
\theoremstyle{definition}
 \newtheorem{definition}[theorem]{Definition}

\theoremstyle{remark}
 \newtheorem{remark}[theorem]{Remark}

\DeclareMathOperator{\rank}{rank}

\makeindex

\begin{document}
\title[Rationality of moduli spaces of plane curves]{Rationality of moduli spaces of plane curves of small degree}

\author[B\"ohning]{Christian B\"ohning} 
\author[Graf v. Bothmer]{Hans-Christian Graf von Bothmer}
\author[Kr\"oker]{Jakob Kr\"oker}

\maketitle

\newcommand{\PP}{\mathbb{P}} 
\newcommand{\QQ}{\mathbb{Q}} 
\newcommand{\ZZ}{\mathbb{Z}} 
\newcommand{\CC}{\mathbb{C}} 
\newcommand{\rmprec}{\wp}
\newcommand{\rmconst}{\mathrm{const}}
\newcommand{\xycenter}[1]{\begin{center}\mbox{\xymatrix{#1}}\end{center}} 

\newboolean{xlabels} 
\newcommand{\xlabel}[1]{ 
                        \label{#1} 
                        \ifthenelse{\boolean{xlabels}} 
                                   {\marginpar[\hfill{\tiny #1}]{{\tiny #1}}} 
                                   {} 
                       } 
\setboolean{xlabels}{true} 

\begin{abstract}
We prove that the moduli space $C(d)$ of plane curves of degree $d$ (for projective equivalence) is rational except possibly if $d= 6, \: 7, \: 8, \:
11, \: 12,\:  14,\: 15,\: 16,\: 18,\: 20,\: 23,\: 24,\: 26,\: 32,\: 48$.
\end{abstract}

\section{Introduction} \xlabel{sIntro}

Let $C(d):= \PP (\mathrm{Sym}^d (\CC^3)^{\vee })/\mathrm{SL}_3 (\CC )$ be the moduli space of
plane curves of degree $d$. As a particular instance of the general question of rationality for invariant function fields under actions of connected linear
algebraic groups (see \cite{Dol0} for a survey), one can ask if $C(d)$ is always a rational space. The main results obtained in this direction in the
past can be summarized as follows:

\begin{itemize}
\item
$C(d)$ is rational for $d\equiv 0$ (mod $3$) and $d\ge 210$ (\cite{Kat89}).
\item
$C(d)$ is rational for $d\equiv 1$ (mod $3$), $d \ge 37$, and for $d\equiv 2$ (mod $3$), $d\ge 65$ (\cite{BvB08-1}).
\item
$C(d)$ is rational for $d\equiv 1$ (mod $4$) (\cite{Shep}).
\end{itemize}

Apart from these general results, rationality of $C(d)$ was known for some sporadic smaller values of $d$ for which the problem, however, can be very
hard (cf. e.g. \cite{Kat92/2}, \cite{Kat96}).\\
In this paper, using methods of computer algebra, we improve these results substantially so that only $15$ values of $d$ remain for which rationality
of $C(d)$ is open. This is the content of our main Theorem \ref{tComprehensive}. 

\

In Section \ref{sDoubleBundleAlgorithms} we discuss the algorithms used to improve the result that $C(d)$ is rational for $d\equiv 0$ (mod $3$) and $d\ge 210$ (see above) to the
degree that $C(d)$ is rational for $d\equiv 0$ (mod $3$) and $d\ge 30$ with the possible exception of $d=48$. This is the hardest part
computationally. We use the \emph{double bundle method} of \cite{Bo-Ka} and an algorithm to find matrix representatives for certain $\mathrm{SL}_3
(\CC )$-equivariant bilinear maps
\[
\psi\, :\, V \times U \to W
\]
($V$, $U$, $W$ $\mathrm{SL}_3 (\CC )$-representations) in a fast and algorithmically efficient way. It is described in Section \ref{sDoubleBundleAlgorithms}, and ultimately based
on writing a homogeneous polynomial as a sum of powers of linear forms. An immense speedup of our software was achieved by using the FFPACK-Library \cite{FFPACK} for linear algebra over finite fields.

\

In Section \ref{sCovariantAlgorithms} we describe the methods and algorithms to improve the degree bounds for $d\equiv 1$ (mod $3$) and $d\equiv 2$ (mod $3$) mentioned above:
we obtain rationality of $C(d)$ for $d\equiv 1$ (mod $3$) and $d\ge 19$ (for $d\equiv 1$ (mod $9$), $d\ge 19$, Shepherd-Barron had proven 
rationality in \cite{Shep}), and for $d\equiv 2$ (mod $3$), $d\ge 35$. This uses techniques introduced in \cite{BvB08-1} and is ultimately based on
the \emph{method of covariants} which appeared for the first time in \cite{Shep} as well as writing a homogeneous polynomial as a sum of powers of linear forms and interpolation.

\

In Section \ref{sApplications} we summarize these results, and combine them with the known results for $C(d)$ for
smaller $d$ and with the proofs of rationality for $C(10)$ and $C(27)$ (the method to prove rationality for $C(10)$ was suggested in \cite{Bo-Ka}).

\section{The Double Bundle Method: Algorithms} \xlabel{sDoubleBundleAlgorithms}

In this section we give a brief account of the so-called double bundle method, and then describe the algorithms pertaining to it that we use in our
applications. 
The main technical point is the so called "no-name lemma".

\begin{lemma}\xlabel{lNoNameLemma}
Let $G$ be a linear algebraic group with an almost free action on a variety $X$. Let $\pi \, :\, \mathcal{E} \to X$ be a $G$-vector bundle of rank
$r$ on $X$. Then one has the following commutative diagram of $G$-varieties
\xycenter{
\mathcal{E} \ar@{-->}[r]^{f} \ar[rd]_{\pi} &X\times\mathbb{A}^r \ar[d]^{\mathrm{pr}_1}\\
&X 
	}
where $G$ acts trivially on $\mathbb{A}^r$, $\mathrm{pr}_1$ is the projection onto $X$, and the rational map $f$ is \emph{birational}.
\end{lemma}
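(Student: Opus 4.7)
The plan is to pass to an open subset of $X$ where the $G$-action admits a well-behaved geometric quotient, descend the equivariant bundle $\mathcal{E}$ to a (non-equivariant) bundle on that quotient, trivialize the latter generically, and pull the trivialization back.

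First, I would invoke Rosenlicht's theorem on quotients by almost free algebraic group actions: after replacing $X$ by a $G$-invariant dense open subset $U$, I may assume the action on $U$ is free and a geometric quotient $p : U \to Y$ exists, with $p$ a principal $G$-bundle in an appropriate topology (étale, say; local triviality in the Zariski topology could be arranged after further shrinking, but the étale statement suffices). Next, I would appeal to equivariant descent along the $G$-torsor $p$: pullback $p^{*}$ gives an equivalence between coherent sheaves on $Y$ and $G$-equivariant coherent sheaves on $U$, so $\mathcal{E}|_U$ comes from a rank $r$ vector bundle $\bar{\mathcal{E}}$ on $Y$ via a $G$-equivariant isomorphism $\mathcal{E}|_U \cong p^{*} \bar{\mathcal{E}}$, where $G$ acts on the right-hand side solely through its action on $U$.

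Finally, since vector bundles on a variety are generically trivial, shrinking $Y$ to a dense open $V$ on which $\bar{\mathcal{E}}$ becomes trivial gives $\bar{\mathcal{E}}|_V \cong V \times \mathbb{A}^r$, with $G$ acting trivially on the $\mathbb{A}^r$-factor. Setting $U' := p^{-1}(V)$ and pulling back yields a $G$-equivariant isomorphism $\mathcal{E}|_{U'} \cong U' \times \mathbb{A}^r$ which commutes with $\pi$ and with $\mathrm{pr}_1$; this isomorphism extends to the desired birational map $f : \mathcal{E} \dashrightarrow X \times \mathbb{A}^r$ fitting into the stated diagram.

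The step I expect to be the main obstacle is the equivariant descent: it genuinely requires freeness of the action (not merely the pointwise triviality of stabilizers assured by "almost free") together with a suitable local-triviality property of $p$, and it is here that the hypothesis of almost freeness, combined with working over $\CC$ so that Luna's étale slice theorem is available, actually does the work in producing the principal bundle structure on $p$. The remaining ingredients — Rosenlicht's theorem, generic triviality of a vector bundle, and flat pullback — are classical and routine.
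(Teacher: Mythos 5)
Your argument is correct and is exactly the route the paper indicates (it only sketches the proof and cites \cite{Bo-Ka} and \cite{Ch-G-R}): shrink via Rosenlicht to get a torsor $U\to Y$, descend the equivariant bundle, and use that the descended locally free sheaf is generically (Zariski-)trivial. The only caveat is that your appeal to Luna's slice theorem covers the reductive/quasi-projective setting, which is the case the paper calls ``immediate''; for a general linear algebraic group one instead invokes the torsor structure established in \cite{Ch-G-R}, \S 4.3.
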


If $X$ embeds $G$-equivariantly in $\PP (V)$, $V$ a $G$-module, $G$ is reductive and $X$ contains stable points of $\PP (V)$, then this is an
immediate application of descent theory and the fact that a vector bundle in the \'{e}tale topology is a vector bundle in the Zariski topology. The
result appears in \cite{Bo-Ka}. A proof without the previous technical restrictions is given in \cite{Ch-G-R}, \S 4.3.\\
The following result (\cite{Bo-Ka}, \cite{Kat89}) is the form in which Lemma \ref{lNoNameLemma} is most often applied since it allows one to extend its
scope to irreducible representations. 

\begin{theorem}\xlabel{tDoubleBundleOriginal}
Let $G$ be a linear algebraic group, and let $U$, $V$ and $W$, $K$ be (finite-dimensional) $G$-representations. Assume that the stabilizer in general
position of $G$ in $U$, $V$ and $K$ is equal to one and the same subgroup $H$ in $G$ which is also assumed to equal the ineffectiveness kernel in these
representations (so that the action of $G/H$ on $U$, $V$, $K$ is almost free).\\
The relations $\dim U -\dim W =1$ and $\dim V - \dim U > \dim K$ are required to hold.\\
Suppose moreover that there is a $G$-equivariant bilinear map 
\[
\psi \, :\, V \times U \to W
\]
and a point $(x_0, \: y_0) \in V\times U$ with $\psi (x_0, \: y_0)=0$ and $\psi (x_0, \: U) =W$, $\psi (V, \: y_0) =W$.\\
Then if $K/G$ is rational, the same holds for $\PP (V)/G$.
\end{theorem}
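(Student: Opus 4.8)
The plan is to combine the no-name lemma with a fibration argument. First I would set up the geometric picture: the bilinear map $\psi : V \times U \to W$ gives, for each $v \in V$, a linear map $\psi_v : U \to W$; since $\dim U - \dim W = 1$, for $v$ in a dense open subset $V^\circ \subseteq V$ the map $\psi_v$ is surjective and $\ker \psi_v$ is a line in $U$. This defines a $G$-equivariant rational map $V \dashrightarrow \PP(U)$, $v \mapsto [\ker \psi_v]$; I want to understand its fibres. Dually, for $y \in U$ the map $\psi^y : V \to W$, $v \mapsto \psi(v,y)$ is linear, and the fibre over a point of $\PP(U)$ represented by $y_0$ is essentially $\PP$ of the linear subspace $\{v : \psi(v, y_0) = 0\} = \ker \psi^{y_0} \subseteq V$. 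The nondegeneracy hypotheses at $(x_0, y_0)$ — namely $\psi(x_0, U) = W$ and $\psi(V, y_0) = W$ — guarantee that this rational map is dominant (its image contains $[\langle y_0\rangle]$, where the fibre has the expected dimension) and that the generic fibre has dimension $\dim V - \dim U$; more precisely $\dim \ker \psi^{y_0} = \dim V - \dim W = \dim V - \dim U + 1$, so the projectivized fibre has dimension $\dim V - \dim U$.

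Next I would pass to the $G$-quotients. Restricting to a suitable $G$-stable dense open subset of $\PP(U)$ on which the action is almost free (using the hypothesis on the stabilizer in general position and ineffectiveness kernel), the total space $\PP(V) \dashrightarrow \PP(U)$ becomes, generically, a projective bundle minus a section's worth of data — or, better, I would work with the associated affine/vector-bundle picture so that the no-name lemma applies directly. Concretely, the construction exhibits an open subvariety of $\PP(V)$ as birational to the total space of a $G$-vector bundle $\mathcal{E}$ of rank $\dim V - \dim U + 1$ over an open subvariety $X \subseteq \PP(U)$ (one has to projectivize carefully, but the point $(x_0,y_0)$ with $\psi(x_0,y_0)=0$ provides the section needed to trivialize the $\PP$ into an $\mathbb{A}$ plus a line, or one argues on the complement of the zero section). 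Applying Lemma \ref{lNoNameLemma} to $\mathcal{E} \to X$ yields $\mathcal{E} \sim_{\mathrm{bir}} X \times \mathbb{A}^{r}$ with $G$ acting trivially on $\mathbb{A}^r$ and $r = \dim V - \dim U + 1$. Hence $\PP(V)/G$ is birational to $(X \times \mathbb{A}^r)/G = (X/G) \times \mathbb{A}^r$, and it remains to handle $X/G$, which is an open subset of $\PP(U)/G$.

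Finally I would reduce $\PP(U)/G$ to $K/G$. Here the inequality $\dim V - \dim U > \dim K$, equivalently $r - 1 \ge \dim K$, is what is used: one more application of the no-name lemma (or rather the same bundle-style argument) to the trivial $G$-bundle $U \times K \to U$ — or to a slice — shows that $(\PP(U) \times \PP(K))/G$, which is birational to $\PP(U)/G \times (\text{affine space})$ by no-name over $\PP(U)/G$ and also birational to $\PP(K)/G \times (\text{affine space})$ by no-name over $\PP(K)/G$, provided the dimension count $r$ leaves enough room; this forces $\PP(U)/G$ and $\PP(K)/G = K/G$ to be stably birational with the same number of added variables, hence birational. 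Chaining the birationalities, $\PP(V)/G \sim (X/G)\times \mathbb{A}^r \sim (K/G) \times \mathbb{A}^{r'}$, so rationality of $K/G$ implies rationality of $\PP(V)/G$. The main obstacle is the bookkeeping in the middle step: verifying that the rational map $\PP(V) \dashrightarrow \PP(U)$ genuinely has the structure of (an open subset of) a $G$-vector bundle rather than merely a projective bundle — this is exactly where the vanishing $\psi(x_0,y_0)=0$ together with the surjectivity conditions is essential, and where one must be careful that all the open sets chosen are $G$-stable and meet the locus of almost free action so that Lemma \ref{lNoNameLemma} applies. The dimension hypotheses are precisely calibrated so that at the end no variables are "left over" obstructing the passage from stable birationality to birationality.
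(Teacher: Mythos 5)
Your overall strategy --- the incidence correspondence between $V$ and $U$ cut out by $\psi$, the no-name lemma to untwist the $G$-action on the fibres, and the reduction of $\PP(U)/G$ to $K/G$ via the two bundle structures on $U\oplus K$ --- is the same as the paper's. But there is a genuine gap in your middle step, and you have half-noticed it yourself. The map $\PP(V)\dasharrow\PP(U)$ has \emph{projective} spaces $\PP(\ker\psi^{y})$ as fibres, and the no-name lemma applies to vector bundles, not projective bundles: an equivariant projective bundle need not descend to a Zariski-locally trivial one on the quotient, and even when it does, a $\PP^r$-bundle is not birational to $(\text{base})\times\PP^r$ without a rational section or some other reduction of structure group. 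Your proposed fix --- that the single point $(x_0,y_0)$ with $\psi(x_0,y_0)=0$ ``provides the section needed to trivialize the $\PP$ into an $\mathbb{A}$ plus a line'' --- does not work: one point of the total space is not a section, and the hypotheses $\psi(x_0,U)=W$, $\psi(V,y_0)=W$ serve only to guarantee that $\psi^{-1}(0)$ has a component dominating both factors with fibres of the expected dimensions.

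The paper resolves this by not projectivizing prematurely. It takes the component $X$ of $\psi^{-1}(0)\subset V\times U$ through $(x_0,y_0)$ and observes that $X$ is simultaneously a $\Gamma$-vector bundle of rank $1=\dim U-\dim W$ over an open $V_0\subset V$ and of rank $\dim V-\dim W$ over an open $U_0\subset U$ (where $\Gamma=G/H$). Applying the no-name lemma to \emph{both} affine projections and then disposing of the residual homothety torus $\CC^{\ast}\times\CC^{\ast}$ by Rosenlicht's section theorem yields $\PP(V)/\Gamma\sim(\PP(U)/\Gamma)\times\PP^{\dim V-\dim U}$; you need this double fibration (or an equivalent device) to legitimately convert your projective fibration into a product. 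A second, smaller slip: at the end you identify $\PP(K)/G$ with $K/G$ and argue that $\PP(U)/G$ and $K/G$ are ``stably birational with the same number of added variables, hence birational'' --- they are not even of the same dimension, and stable birationality does not upgrade to birationality in this way. What the inequality $\dim V-\dim U>\dim K$ actually buys is that $\PP(U)/\Gamma$ is stably rational of level at most $\dim K+1$ once $K/\Gamma$ is rational, so the factor $\PP^{\dim V-\dim U}$ is large enough to make the product $(\PP(U)/\Gamma)\times\PP^{\dim V-\dim U}$ rational.
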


\begin{proof}
We abbreviate $\Gamma := G/H$ and let $\mathrm{pr}_U$ and $\mathrm{pr}_V$ be the projections of $V\times U$ to $U$ and $V$. By the genericity
assumption on $\psi$, there is a unique irreducible component $X$ of $\psi^{-1}(0)$ passing through $(x_0, \: y_0)$, and there are non-empty open
$\Gamma$-invariant sets
$V_0\subset V$ resp.
$U_0\subset U$ where $\Gamma$ acts with trivial stabilizer and the fibres $X\cap \mathrm{pr}_V^{-1}(v)$ resp. $X\cap \mathrm{pr}_U^{-1}(u)$ have the
expected dimensions $\dim U -\dim W=1$ resp. $\dim V -\dim W$. Thus
\[
\mathrm{pr}_V^{-1}(V_0) \cap X \to V_0 , \quad \mathrm{pr}_U^{-1}(U_0) \cap X \to U_0
\]
are $\Gamma$-equivariant bundles, and by Lemma \ref{lNoNameLemma} one obtains vector bundles
\[
(\mathrm{pr}_V^{-1}(V_0) \cap X)/\Gamma \to V_0/\Gamma , \quad (\mathrm{pr}_U^{-1}(U_0) \cap X)/\Gamma \to U_0/\Gamma
\]
of rank $1$ and $\dim V -\dim W$ 
and there is still a homothetic $T:=\CC^{\ast}\times \CC^{\ast}$-action on these bundles. By a well-known theorem of Rosenlicht \cite{Ros}, the action
of the torus
$T$ on the respective base spaces of these bundles has a section over which the bundles are trivial; thus we get 
\[
\PP (V)/\Gamma \sim (\PP (U) /\Gamma ) \times \PP^{\dim V-\dim W-1} = (\PP (U) /\Gamma ) \times \PP^{\dim V-\dim U} \, .
\]
On the other hand, one may view $U\oplus K$ as a $\Gamma$-vector bundle over both $U$ and $K$; hence, again by Lemma \ref{lNoNameLemma},
\[
U/\Gamma \times \PP^{\dim K} \sim K/\Gamma \times \PP^{\dim U}\, .
\] 
Since $U/\Gamma$ is certainly stably rationally equivalent to $\PP (U)/\Gamma$ of level at most one, the inequality $\dim V -\dim U > \dim K$ insures
that $\PP (V)/\Gamma$ is rational as $K/\Gamma$ is rational. 
\end{proof}

In \cite{Kat89} this is used to prove the rationality of the moduli spaces $\PP (\mathrm{Sym}^d (\CC^3)^{\vee }) /\mathrm{SL}_3 (\CC )$ of plane curves
of degree $d \equiv 0$ (mod $3$) and $d \ge 210$. A clever inductive procedure is used there to reduce the genericity requirement for the occurring
bilinear maps $\psi$ to a purely numerical condition on the labels of highest weights of irreducible summands in $V$, $U$, $W$. This method is only
applicable if $d$ is large. We will obtain rather comprehensive results for $d \equiv 0$ (mod $3$), and $d$ smaller than $210$ by explicit computer
calculations.

In the following we put $G:=\mathrm{SL}_3 (\CC )$ and denote as usual by $V(a, \: b)$ the irreducible $G$-module whose highest weight has numerical
labels $a$, $b$ with respect to the fundamental weights $\omega_1$, $\omega_2$ determined by the choice of the torus $T$ of diagonal matrices and the
Borel subgroup $B$ of upper triangular matrices. In addition we abbreviate
\[
S^a := \mathrm{Sym}^a (\CC^3), \quad D^b := \mathrm{Sym}^b (\CC^3)^{\vee }
\] 
and introduce dual bases $e_1, \: e_2, \: e_3$ in $\CC^3$ and $x_1, \: x_2, \: x_3$ in $(\CC^3 )^{\vee }$. Recall that $V(a, \: b)$ is the kernel of the
$G$-equivariant operator
\[
\Delta\, :\, S^a \otimes D^b \to S^{a-1} \otimes D^{b-1}, \quad \Delta = \sum_{i=1}^3 \frac{\partial}{\partial e_i}\otimes \frac{\partial}{\partial
x_i}
\]
(we will always view $V(a, \: b)$ realized in this way in the following) and there is also the $G$-equivariant operator 
\[
\delta\, :\, S^{a-1} \otimes D^{b-1} \to S^{a} \otimes D^{b}, \quad \delta = \sum_{i=1}^3  e_i\otimes x_i \, .
\]
In particular,
\[
S^a \otimes D^b = \bigoplus_{i=0}^{\mathrm{min}(a, \:b)} V(a-i, \: b-i)
\]
as $G$-modules.\\
In the vast majority of cases where we apply Theorem \ref{tDoubleBundleOriginal} we will have
\begin{gather}\label{defUVW}
U:= V(e, \: 0), \quad V:= V(0, \: f), \\
W:= V(e-i_1, \: f-i_1)\oplus \dots \oplus V(e-i_m, \: f-i_m) \nonumber
\end{gather}
for some non-negative integers $e$ and $f$ and integers $0\le i_1 < i_2 < \dots < i_m \le M:= \mathrm{min}(e, \: f)$.\\
We need a fast method to compute the
$G$-equivariant map
\begin{gather}\label{formulaBilinearMap}
\psi \, :\, U  \otimes V \to W \, .
\end{gather}

\begin{remark}\xlabel{rTransposeMap}
If we know how to compute the map $\psi$ in formula \ref{formulaBilinearMap}, in the sense say, that upon choosing bases $u_1, \dots ,u_r$ in $U$,
$v_1, \dots , v_s$ in $V$, $w_1, \dots , w_t$ in $W$, we know the $t$ matrices of size $r \times s$ 
\[
M^1, \dots , M^t
\]
given by 
\[
(M^k)_{ij} := (w_k)^{\vee } ( \psi (u_i, \: v_j)) \, ,
\]
then the map
\begin{gather*}
\tilde{\psi }\, : \, W^{\vee } \otimes V \to U^{\vee}\, ,\\
\tilde{\psi }( l_W , \: v) (u) = l_W (\psi (u, \: v)) \, \: l_W \in W^{\vee}, \: v\in V, \: u\in U
\end{gather*}
induced by $\psi$ has a similar representation by $r$ matrices of size $t\times s$
\[
N^1, \dots , N^r
\]
in terms of the bases $w_1^{\vee }, \dots , w_t^{\vee }$ of $W^{\vee }$, $v_1, \dots , v_s$ of $V$, and $u_1^{\vee }, \dots , u_r^{\vee }$ of
$U^{\vee}$. In fact,
\begin{align*}
(N^{i})_{kj} &= (\tilde{\psi} (w^{\vee }_k, \: v_j)) (u_i)\\
   &= w^{\vee }_k (\psi (u_i, \: v_j)) = (M^k)_{ij}\, .
\end{align*}
The map $\tilde{\psi }$ is occasionally convenient to use instead of $\psi$.
\end{remark}

We now describe how we compute $\psi$ by writing elements of $U\otimes V$ as sums of pure tensor products of powers of linear forms. We start by proving some helpful formulas:

\begin{lemma} \xlabel{lDelta}
Let $u \in \CC^3$ and $v \in (\CC^3)^\vee$. Then
\begin{enumerate}
\item $\Delta (u^e\otimes v^f) =   e f v(u) u^{e-1}\otimes v^{f-1}$
\item $\Delta^i (u^e\otimes v^f) = \frac{e! }{(e-i)!} \frac{f!}{(f-i)!} v(u)^i  u^{e-i} \otimes v^{f-i}$
\end{enumerate}
\end{lemma}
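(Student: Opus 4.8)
The plan is to reduce everything to a single application of the operator identity $\frac{\partial}{\partial e_i}(u^e) = e\, u_i\, u^{e-1}$ for the coordinate $u_i$ of $u$, and the dual identity $\frac{\partial}{\partial x_i}(v^f) = f\, v_i\, v^{f-1}$ for the coordinate $v_i$ of $v$, where we write $u = \sum_i u_i e_i$ and $v = \sum_i v_i x_i$. First I would establish part (1). Applying $\Delta = \sum_{i=1}^3 \frac{\partial}{\partial e_i}\otimes \frac{\partial}{\partial x_i}$ to the pure tensor $u^e\otimes v^f$ gives, term by term, $\sum_i \bigl(e\, u_i\, u^{e-1}\bigr)\otimes\bigl(f\, v_i\, v^{f-1}\bigr) = ef\,\bigl(\sum_i u_i v_i\bigr)\, u^{e-1}\otimes v^{f-1}$. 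Since $v(u) = \sum_i v_i u_i$ by the definition of the dual pairing with dual bases $e_1,e_2,e_3$ and $x_1,x_2,x_3$, this is exactly $ef\, v(u)\, u^{e-1}\otimes v^{f-1}$, proving (1).

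For part (2) I would argue by induction on $i$, the base case $i=0$ being trivial (or $i=1$ being part (1)). Assume the formula holds for some $i$; then applying $\Delta$ once more to $\frac{e!}{(e-i)!}\frac{f!}{(f-i)!} v(u)^i\, u^{e-i}\otimes v^{f-i}$ and using part (1) with $e$ replaced by $e-i$ and $f$ replaced by $f-i$ yields the scalar factor $\frac{e!}{(e-i)!}\frac{f!}{(f-i)!}\, v(u)^i\cdot (e-i)(f-i)\, v(u) = \frac{e!}{(e-i-1)!}\frac{f!}{(f-i-1)!}\, v(u)^{i+1}$ times $u^{e-i-1}\otimes v^{f-i-1}$, which is the formula for $i+1$. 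Since $v(u)$ is a scalar, it commutes past $\Delta$, so no extra terms appear.

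I do not expect any real obstacle here: the only point requiring a moment's care is that $v(u)$ is genuinely a constant (so that powers of it pull out of the linear operator $\Delta$), and that the ordinary power rule for $\frac{\partial}{\partial e_i}$ and $\frac{\partial}{\partial x_i}$ on a power of a linear form is being used correctly, including the case $e-i < 1$ or $f-i < 1$ where both sides are understood to vanish (consistent with the convention that the reciprocal factorials are taken to be the appropriate limiting value, i.e. the statement is only used for $i \le \min(e,f)$). One could alternatively prove (2) directly by expanding $\Delta^i$ and using multinomial bookkeeping, but the inductive route via (1) is cleaner and I would present that.
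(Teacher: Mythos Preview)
Your proof is correct. It differs from the paper's in the way part (1) is established: the paper first disposes of the case $v(u)=0$ and then changes to a pair of dual bases $u_1,u_2,u_3$ and $v_1,v_2,v_3$ with $v_1=v$ and $u_1=u/v(u)$, so that only the $\partial/\partial u_1\otimes\partial/\partial v_1$ summand of $\Delta$ contributes and the identity drops out immediately. Your argument instead stays in the fixed bases $e_i$, $x_i$ and applies the chain rule to each summand, so that all three terms appear and then recombine to $v(u)$. Your route is slightly more elementary (no need to invoke, even implicitly, that $\Delta$ is independent of the choice of dual bases), while the paper's route avoids having to track the sum $\sum_i u_iv_i$. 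For part (2) both arguments agree: iterate part (1).
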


\begin{proof} 
We can assume $v(u) \neq 0$ for otherwise $\Delta (u^e \otimes v^f) =0$. We put 
\[
u_1 := \frac{u}{v(u)}
\]
so that $v(u_1) =1$ and complete $v_1:=v$ and $u_1$ to dual bases $u_1, \: u_2, \: u_3$ in $\CC^3$ and $v_1, \: v_2, \: v_3$ in $(\CC^3 )^{\vee }$. Then
\begin{align*}
\Delta (u^e\otimes v^f) &= \left( \frac{\partial }{\partial u_1} \otimes \frac{\partial }{\partial v_1} +  \frac{\partial }{\partial u_2} \otimes
\frac{\partial }{\partial v_2} + \frac{\partial }{\partial u_3} \otimes \frac{\partial }{\partial v_3} \right) (u^e\otimes v^f) \\
   &= f\frac{\partial }{\partial u_1} \left( (v(u) u_1)^e  \right) \otimes v^{f-1}\\
   &= f e(v(u))^eu_1^{e-1}\otimes v^{f-1}\\
   &= ef  v(u) u^{e-1}\otimes v^{f-1}.
\end{align*}
This gives the first formula. Iterating it gives the second one.
\end{proof}

\begin{lemma}\xlabel{lPolynomialNature}
Let  $\pi_{e,\: f, \: i}$ be the equivariant projection
\[
\pi_{e, \: f, \: i}\, :\, S^e \otimes D^f \to V(e-i, \: f-i) \subset S^e \otimes D^f.
\]
Then one has
\[
\pi_{e, \: f,i} = \sum_{j=0}^{\min(e,f)} \mu_{i,j} \delta^j \Delta^j
\]
for certain $\mu_{i,j}\in\QQ$. 
\end{lemma}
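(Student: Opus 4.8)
The plan is to work inside the $G$-module $S^e\otimes D^f=\bigoplus_{k=0}^{\min(e,f)}V(e-k,f-k)$ and to show that the operators $\delta^j\Delta^j$, for $j=0,\dots,\min(e,f)$, span the same subspace of $\operatorname{End}_G(S^e\otimes D^f)$ as the projections $\pi_{e,f,i}$; once that is done, expressing $\pi_{e,f,i}$ as a rational linear combination of the $\delta^j\Delta^j$ is pure linear algebra over $\QQ$. First I would record that $\operatorname{End}_G(S^e\otimes D^f)$ has dimension exactly $\min(e,f)+1$, since the decomposition above is multiplicity-free (each $V(e-k,f-k)$ occurs once), and the projections $\pi_{e,f,0},\dots,\pi_{e,f,\min(e,f)}$ form a basis of it consisting of orthogonal idempotents. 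So it suffices to prove that the $\min(e,f)+1$ endomorphisms $\delta^0\Delta^0=\mathrm{id},\ \delta\Delta,\ \delta^2\Delta^2,\dots$ are linearly independent, hence also a basis, and then invert the change-of-basis matrix over $\QQ$.

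The key computation is to understand how $\delta^j\Delta^j$ acts on the summand $V(e-k,f-k)$. Each such summand is $G$-irreducible, so by Schur's lemma $\delta^j\Delta^j$ acts on it as a scalar $\lambda_{j,k}\in\CC$; moreover $\lambda_{j,k}=0$ whenever $j>k$, because $\Delta^j$ already kills $V(e-k,f-k)$ for $j>k$ (it lands in a module where that isotypic piece is absent — more precisely $\Delta$ maps $V(e-k,f-k)\subset S^e\otimes D^f$ into the copy of $V(e-k,f-k)$ inside $S^{e-1}\otimes D^{f-1}$, and iterating, $\Delta^j$ vanishes on $V(e-k,f-k)$ once $j$ exceeds the number of $\delta$-powers by which that summand sits above the ``bottom'', i.e. once $j>k$). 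For $j\le k$ one computes $\lambda_{j,k}\neq 0$: this is where Lemma \ref{lDelta} enters. Indeed, for a vector $u\in\CC^3$, $v\in(\CC^3)^\vee$ with $v(u)\neq 0$, the element $u^e\otimes v^f$ has nonzero projection onto \emph{every} summand $V(e-k,f-k)$ (a short highest-weight-vector argument, or the standard fact that powers of linear forms are not contained in any proper $G$-submodule complemented by the $V(e-k,f-k)$ with $k>0$), and applying Lemma \ref{lDelta}(2) gives $\Delta^j(u^e\otimes v^f)=\frac{e!}{(e-j)!}\frac{f!}{(f-j)!}v(u)^j\,u^{e-j}\otimes v^{f-j}$, then $\delta^j$ of that is again a nonzero multiple of $u^e\otimes v^f$ plus lower terms; tracking the leading coefficient componentwise against $k$ shows $\lambda_{j,k}\neq 0$ exactly when $j\le k$. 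In matrix terms, writing $\delta^j\Delta^j=\sum_k\lambda_{j,k}\pi_{e,f,k}$, the matrix $(\lambda_{j,k})_{0\le j,k\le\min(e,f)}$ is upper triangular (in the index $k$) with nonzero diagonal entries $\lambda_{k,k}\neq 0$, hence invertible over $\QQ$.

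Given invertibility of $(\lambda_{j,k})$, let $(\mu_{i,j})$ be the inverse matrix; its entries lie in $\QQ$ since the $\lambda_{j,k}$ are rational numbers built from factorials. Then
\[
\sum_{j}\mu_{i,j}\,\delta^j\Delta^j=\sum_j\mu_{i,j}\sum_k\lambda_{j,k}\,\pi_{e,f,k}=\sum_k\Big(\sum_j\mu_{i,j}\lambda_{j,k}\Big)\pi_{e,f,k}=\sum_k\delta_{ik}\,\pi_{e,f,k}=\pi_{e,f,i},
\]
which is exactly the claimed identity. I expect the main obstacle to be the clean verification that $\Delta^j$ annihilates $V(e-k,f-k)$ precisely for $j>k$ and that the diagonal scalars $\lambda_{k,k}$ are genuinely nonzero — i.e. pinning down the triangular structure of $(\lambda_{j,k})$ — rather than the final inversion, which is formal. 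One convenient way around the representation-theoretic bookkeeping is to test the operators on the explicit ``rank-one'' vectors $u^e\otimes v^f$ as $u,v$ vary: since these vectors span $S^e\otimes D^f$ (Veronese / polarization) and Lemma \ref{lDelta} gives a closed form for $\Delta^i$ on them, one can read off the full action of each $\delta^j\Delta^j$, and hence of any $G$-endomorphism, directly from that formula, which both proves the lemma and simultaneously yields an algorithm for the $\mu_{i,j}$.
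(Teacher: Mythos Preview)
Your overall strategy is correct and genuinely different from the paper's. The paper argues by induction on $\min(e,f)$: it uses Schur's lemma to write $\pi_{e,f,i}=\lambda_i\,\delta^i\pi_{e-i,f-i,0}\Delta^i$ for a nonzero constant $\lambda_i$, then expresses $\pi_{e-i,f-i,0}=\mathrm{id}-\sum_{k\ge 1}\pi_{e-i,f-i,k}$, each summand of which already has the desired form by the inductive hypothesis (since it factors through a strictly smaller $S^{e-i-k}\otimes D^{f-i-k}$). Substituting back gives $\pi_{e,f,i}$ as a $\QQ$-combination of $\delta^{i+j}\Delta^{i+j}$. This recursive argument also produces, essentially for free, the practical recipe for computing the $\mu_{i,j}$ that the paper needs immediately afterwards. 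Your approach, by contrast, is a one-shot linear-algebra argument in $\mathrm{End}_G(S^e\otimes D^f)$: show the $(\min(e,f)+1)$ operators $\delta^j\Delta^j$ form a basis by exhibiting a triangular change-of-basis matrix $(\lambda_{j,k})$ with nonzero diagonal, then invert. This is arguably more transparent about \emph{why} the lemma holds, and it makes the rationality of the $\mu_{i,j}$ explicit (inverse of a rational matrix).

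The one place where your argument is not yet solid is the verification that $\lambda_{k,k}\neq 0$. Your test-vector sketch via $u^e\otimes v^f$ does not quite work as written: $\delta^j\Delta^j(u^e\otimes v^f)$ is \emph{not} a scalar multiple of $u^e\otimes v^f$ (only $\Delta^j$ of it is rank-one; $\delta^j$ then spreads it out), so you cannot simply ``read off the leading coefficient componentwise'' to extract the eigenvalue on $V(e-k,f-k)$. A cleaner route: show directly that $\ker\Delta^j=\bigoplus_{k<j}V(e-k,f-k)$ (immediate by induction from $\ker\Delta=V(e,f)$ and surjectivity of $\Delta$), so $\Delta^j$ is nonzero on $V(e-k,f-k)$ whenever $j\le k$; and $\delta^j$ is injective because $\delta$ is multiplication by the nonzerodivisor $\sum e_ix_i$ in $\CC[e_1,e_2,e_3,x_1,x_2,x_3]$. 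Together these give $\lambda_{j,k}\neq 0$ for $j\le k$, in particular on the diagonal, and your triangular matrix is invertible.
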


\begin{proof} 
Set $\pi_{e,f} := \pi_{e,f,0}$ und 
look at the diagram
\xycenter{
S^e\otimes D^f \ar[r]^{\Delta^i} &S^{e-i}\otimes D^{f-i} \ar[d]^{\pi_{e-i,\: f-i}}\\
&V(e-i, \: f-i)\subset S^{e-i} \otimes D^{f-i} \ar[lu]^{\delta^i}
	}
By Schur's lemma,
\begin{gather}\label{formulaLambdai}
\pi_{e, \: f, \: i} = \lambda_i \delta^i \pi_{e-i, \: f-i} \Delta^i
\end{gather}
for some nonzero constants $\lambda_i$. On the other hand,
\[
\pi_{e, \: f} = \mathrm{id} - \sum_{i=1}^{\mathrm{min}(e, \: f)} \pi_{e, \: f, \: i} \, .
\]
Therefore, since the assertion of the Lemma holds trivially if one of $e$ or $f$ is zero, the general case follows by induction on $i$.
\end{proof}

Note that to compute the $\mu_{i,j}$ in the expression of $\pi_{e, \: f,i}$ in Lemma \ref{lPolynomialNature}, it suffices to calculate the $\lambda_i$ in
formula \ref{formulaLambdai} which can be done by the rule
\[
\frac{1}{\lambda_i} (e_1^{e-i} \otimes x_3^{f-i}) = \left( \pi_{e-i, \: f-i} \circ \Delta^i \circ \delta^{i}\right) (e_1^{e-i} \otimes x_3^{f-i})\, .
\]
Notice that applying $\delta^i \circ \Delta^i$ to a decomposable element can still yield a bihomogeneous polynomial with very many terms. A final improvement in the complexity of calculating $\psi$ is obtained by representing these bihomogeneous polynomials not by a sum of monomials but rather by their value on many points of $\CC^3 \times (\CC^3)^\vee$. Indeed such values can be calculated easily:

\begin{lemma}
Let $a,b\ge0$ be integers, $u \in \CC^3, v \in (\CC^3)^\vee,p \in (\CC^3)^\vee$ and $q \in \CC^3$. Then
\[
     \bigl( \delta^i \circ \Delta^i (u^a \otimes v^b)\bigr) (p,q) = 
      \frac{a! }{(a-i)!} \frac{b!}{(b-i)!} (\delta(p,q))^i v(u)^i  u(p)^{a-i}  v(q)^{b-i}.
\]
\end{lemma}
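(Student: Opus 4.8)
The plan is to reduce everything to Lemma \ref{lDelta} together with the observation that $\delta$ is nothing but the operator of multiplication by the distinguished element $\sum_{j=1}^3 e_j\otimes x_j$ in the bigraded polynomial ring $S^\bullet\otimes D^\bullet = \CC[e_1,e_2,e_3]\otimes\CC[x_1,x_2,x_3]$. Here one views elements of $S^a$ as polynomial functions on $(\CC^3)^\vee$ and elements of $D^b$ as polynomial functions on $\CC^3$; in this picture $u^a\otimes v^b$ is the function $(p,q)\mapsto u(p)^a\,v(q)^b$, and in particular the element $\delta\in S^1\otimes D^1$ evaluates to $\delta(p,q)=\sum_{j=1}^3 e_j(p)\,x_j(q)$, which is how the symbol $\delta(p,q)$ in the statement is to be read.

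First I would apply part (2) of Lemma \ref{lDelta} with $e=a$ and $f=b$ to obtain
\[
\Delta^i(u^a\otimes v^b) = \frac{a!}{(a-i)!}\frac{b!}{(b-i)!}\, v(u)^i\, u^{a-i}\otimes v^{b-i},
\]
and hence, applying $\delta^i$ and pulling the scalar out,
\[
\delta^i\circ\Delta^i(u^a\otimes v^b) = \frac{a!}{(a-i)!}\frac{b!}{(b-i)!}\, v(u)^i\,\delta^i\!\left(u^{a-i}\otimes v^{b-i}\right).
\]
It then remains only to evaluate $\delta^i(u^{a-i}\otimes v^{b-i})$ at $(p,q)$. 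Since $\delta^i$ is multiplication by $\bigl(\sum_j e_j\otimes x_j\bigr)^i$ inside $S^\bullet\otimes D^\bullet$, and evaluation at the fixed point $(p,q)$ is a ring homomorphism $S^\bullet\otimes D^\bullet\to\CC$, one gets
\[
\bigl(\delta^i(u^{a-i}\otimes v^{b-i})\bigr)(p,q) = \bigl(\delta(p,q)\bigr)^i\, u(p)^{a-i}\, v(q)^{b-i},
\]
and substituting this into the previous display yields exactly the asserted formula.

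I do not expect a genuine obstacle: the statement is essentially a bookkeeping consequence of Lemma \ref{lDelta}. The only point requiring care is fixing conventions — that $\delta$ really is the multiplication operator attached to $\sum_j e_j\otimes x_j$ (as it was introduced), so that it interacts with evaluation multiplicatively as above, and that $\delta(p,q)$ denotes the value of this element of $S^1\otimes D^1$ at $(p,q)\in(\CC^3)^\vee\times\CC^3$. Once these are pinned down, the two computations above finish the argument; one could equally well avoid invoking the ring structure and simply write $\delta^i\bigl(u^{a-i}\otimes v^{b-i}\bigr)=\bigl(\sum_j e_j\otimes x_j\bigr)^i\cdot u^{a-i}\otimes v^{b-i}$ and expand directly, but the evaluation-homomorphism phrasing is the cleanest.
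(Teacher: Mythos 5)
Your proof is correct and follows the same route as the paper: apply part (2) of Lemma \ref{lDelta} to reduce to evaluating $\delta^i(u^{a-i}\otimes v^{b-i})$ at $(p,q)$, then evaluate. You have simply spelled out the final evaluation step (that $\delta$ is multiplication by $\sum_j e_j\otimes x_j$ and evaluation at a point is a ring homomorphism), which the paper compresses to the phrase ``Evaluation gives the above formula.''
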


\begin{proof}
By Lemma \ref{lDelta} we have
\[
	 \delta^i \circ \Delta^i (u^a \otimes v^b) (p,q) = \left(\delta^i ( v(u)^i \frac{a! }{(a-i)!} \frac{b!}{(b-i)!} u^{a-i} \otimes v^{b-i}) \right)(p,q).
\]
Evaluation gives the above formula.
\end{proof}

\begin{corollary} \xlabel{cEvaluate}
Let $\psi \colon V\otimes U \to W$ be as above and assume $e\le f$. Then there
exists a homogeneous polynomial $\chi \in \QQ[x,y]$ of degree $e$, such that
\[
	\psi(u^e\otimes v^f)(p,\: q) = v(q)^{f-e} \chi\bigl(\delta(p,q)v(u),u(p)v(q)\bigr)
\]
holds for all $u \in \CC^3, v\in (\CC^3)^\vee, p \in (\CC^3)^\vee$ and $q \in \CC^3$.
\end{corollary}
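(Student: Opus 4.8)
The plan is to exploit that the only $G$-equivariant maps from $S^e\otimes D^f$ to a sub-sum of its isotypic decomposition are, by Schur's lemma, linear combinations of the canonical projections treated in Lemma \ref{lPolynomialNature}; writing $\psi$ in this form turns it into a polynomial in $\delta$ and $\Delta$, and then the asserted identity is just the evaluation formula of the lemma immediately preceding the corollary, applied with $a=e$ and $b=f$, after pulling out the factor $v(q)^{f-e}$.

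First I would recall that $U=V(e,0)=S^e$, $V=V(0,f)=D^f$, and that (since $e=\min(e,f)$ here) one has $S^e\otimes D^f=\bigoplus_{i=0}^{e}V(e-i,f-i)$ with pairwise non-isomorphic summands, while $W=\bigoplus_{j=1}^m V(e-i_j,f-i_j)$ is the direct sum of some of these. I fix the corresponding embedding $W\hookrightarrow S^e\otimes D^f$, so that elements of $W$ are bihomogeneous forms of bidegree $(e,f)$ on $(\CC^3)^\vee\times\CC^3$ and evaluation at a point $(p,q)$ makes sense. Schur's lemma then gives $\psi=\sum_{j=1}^m c_j\,\pi_{e,\:f,\:i_j}$ for scalars $c_j$, which we take in $\QQ$ — this is automatic for the natural normalisation of $\psi$, since the $\pi_{e,\:f,\:i}$ are defined over $\QQ$ by Lemma \ref{lPolynomialNature}. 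Substituting the expression from that lemma, $\pi_{e,\:f,\:i_j}=\sum_{l=0}^{e}\mu_{i_j,\:l}\,\delta^l\Delta^l$, I obtain
\[
\psi=\sum_{l=0}^{e}a_l\,\delta^l\Delta^l,\qquad a_l:=\sum_{j=1}^m c_j\,\mu_{i_j,\:l}\in\QQ .
\]

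Next I would evaluate on $u^e\otimes v^f$ term by term by means of the preceding lemma with $a=e$, $b=f$, giving
\[
\psi(u^e\otimes v^f)(p,q)=\sum_{l=0}^{e}a_l\,\frac{e!}{(e-l)!}\,\frac{f!}{(f-l)!}\,\bigl(\delta(p,q)\bigr)^l v(u)^l\,u(p)^{e-l}\,v(q)^{f-l}.
\]
Because $l\le e\le f$, each summand carries the factor $v(q)^{f-l}=v(q)^{f-e}v(q)^{e-l}$, so pulling out $v(q)^{f-e}$ and setting $x=\delta(p,q)v(u)$, $y=u(p)v(q)$ presents the rest as $\chi(x,y)$, where
\[
\chi(x,y):=\sum_{l=0}^{e}b_l\,x^l y^{e-l},\qquad b_l:=a_l\,\frac{e!}{(e-l)!}\,\frac{f!}{(f-l)!}\in\QQ ,
\]
a homogeneous polynomial of degree $e$ with rational coefficients. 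This is exactly the assertion of the corollary.

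I do not anticipate a real obstacle: the computation is the one displayed above. The only points needing attention are bookkeeping ones — fixing once and for all the identification $W\hookrightarrow S^e\otimes D^f$ that makes ``$\psi(u^e\otimes v^f)(p,q)$'' meaningful, and checking that the structure constants $c_j$, hence $a_l$ and $b_l$, may be chosen rational — and both are immediate.
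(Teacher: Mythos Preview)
Your proof is correct and follows essentially the same route as the paper: write $\psi$ as a rational linear combination of the $\delta^l\Delta^l$ via Lemma \ref{lPolynomialNature}, apply the evaluation formula of the preceding lemma with $a=e$, $b=f$, and factor out $v(q)^{f-e}$. The only cosmetic difference is that the paper simply takes $\psi=\pi_{e,f,i_1}+\dots+\pi_{e,f,i_m}$ (so your $c_j$ are all $1$), whereas you allow arbitrary rational scalars on the projections; this does not affect the argument.
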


\begin{proof}
We have
\[
	\psi 
	= (\pi_{e,f,i_1} + \dots + \pi_{e,f,i_m}).
\]
Using that
\[
	\pi_{e,f,i} = \sum_{j=0}^{e} \lambda_{i,j} \delta^j \Delta^j
\]
for certain $\lambda_{i,j}$ we obtain
\begin{align*}
	\psi(u^e\otimes v^f)(p,q) 
	&= \left( \sum_{\alpha = 1}^{m} \sum_{j=0}^{e} \lambda_{i_\alpha,j} \delta^j\Delta^j (u^e\otimes v^f)\right) (p,q)\\
	&=   \sum_{\alpha = 1}^{m} \sum_{j=0}^{e} \lambda_{i_\alpha,j} (\delta(p,q))^j v(u)^j \frac{e! }{(e-j)!} \frac{f!}{(f-j)!} u(p)^{e-j}  v(q)^{f-j}\\
	&=  v(q)^{f-e} \sum_{\alpha = 1}^{m} \sum_{j=0}^{e} \lambda_{i_\alpha,j} \frac{e! }{(e-j)!} \frac{f!}{(f-j)!} \bigl(\delta(p,q)v(u)\bigr)^j  \bigl(u(p)v(q)\bigr)^{e-j}\\
	&= v(q)^{f-e} \chi\bigl(\delta(p,q)v(u),u(p)v(q)\bigr) .
\end{align*}
\end{proof}
	
Now we are in a position to check the important genericity conditions of Theorem \ref{tDoubleBundleOriginal} efficiently:

\begin{proposition} \xlabel{cRank}
Let $n$ be a positive integer, $u_i \in \CC^3, v_i \in (\CC^3)^\vee, p_i \in (\CC^3)^\vee$ and $q_i \in \CC^3$ for $0 \le i \le n$. Set $x_0= \sum_{i=0}^n \xi_i u_i^e$ and consider the $n \times n$ matrix $M$ with entries
\[
	M_{j,k} = \sum_{i=0}^n \xi_i \psi(u_i^e\otimes v_j^f)(p_k,q_k).
\]
If $\rank M = \dim W$ then $\psi(x_0,V) = W$. Similarly if $y_0 = \sum_{j=0}^n \eta_j v_j^f$ and
$N$ is the $n\times n$ matrix with entries
\[
	N_{i,k} = \sum_{j=0}^n \eta_j \psi(u_i^e\otimes v_j^f)(p_k,q_k).
\]
then $\rank N = \dim W$ implies $\psi(U,y_0) = W$.
\end{proposition}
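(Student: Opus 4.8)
The plan is to recognize $M$ as the transpose of the matrix of a composite $\mathrm{ev}\circ B$ of two linear maps, each of which factors through $W$, so that the hypothesis $\rank M=\dim W$ is forced to make the relevant map \emph{onto} $W$ surjective.

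First I would rewrite the matrix entries. Since $U=V(e,0)=S^e$ and $V=V(0,f)=D^f$ by \eqref{defUVW}, we have $x_0\in U$ and $v_j^f\in V$, and $\psi$ --- which by the proof of Corollary \ref{cEvaluate} is just the projection $\pi_{e,f,i_1}+\dots+\pi_{e,f,i_m}$, hence linear in each tensor factor --- satisfies
\[
M_{j,k}=\psi\bigl(x_0\otimes v_j^f\bigr)(p_k,q_k).
\]
That is, $M_{j,k}$ is the value at the point $(p_k,q_k)\in(\CC^3)^\vee\times\CC^3$ of the bihomogeneous polynomial $\psi(x_0\otimes v_j^f)\in W\subset S^e\otimes D^f$; these are exactly the values rendered explicitly computable by Corollary \ref{cEvaluate}. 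I also note that every element of $S^e\otimes D^f$ is genuinely a function on $(\CC^3)^\vee\times\CC^3$ via the pairings already used in Lemma \ref{lDelta} and Corollary \ref{cEvaluate}, so evaluation at a point is a well-defined linear functional on $W$.

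Next I would write down the factorization. Let $B\colon\CC^n\to W$ be the linear map sending the $j$-th basis vector to $\psi(x_0\otimes v_j^f)$; since its image lies in $W$, $\rank B\le\dim W$. Let $\mathrm{ev}\colon W\to\CC^n$ send $\Phi$ to the tuple $\bigl(\Phi(p_k,q_k)\bigr)_k$; since its source is $W$, $\rank\mathrm{ev}\le\dim W$. A direct check shows $\mathrm{ev}\circ B$ has matrix $M^{\mathrm{T}}$, its $(k,j)$ entry being $\psi(x_0\otimes v_j^f)(p_k,q_k)=M_{j,k}$. Therefore
\[
\dim W=\rank M=\rank M^{\mathrm{T}}=\rank(\mathrm{ev}\circ B)\le\min\{\rank B,\rank\mathrm{ev}\}\le\dim W,
\]
which forces $\rank B=\dim W$, i.e.\ $B$ maps onto $W$. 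Since each $\sum_j c_jv_j^f$ lies in $V$, the image of $B$ is contained in $\psi(x_0,V)$, which in turn is contained in $W$; hence $\psi(x_0,V)=W$. The statement about $N$ and $y_0$ follows by exactly the same argument, with $y_0$ fixed in the second argument of $\psi$ in place of $x_0$ in the first (equivalently, by applying the first half to the transposed map $\tilde\psi$ of Remark \ref{rTransposeMap}).

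There is no real obstacle here: the entire content is the elementary inequality $\rank(\mathrm{ev}\circ B)\le\min\{\rank B,\rank\mathrm{ev}\}$ together with the observation that each factor is pinned below $\dim W$ by having $W$ at one end. The only point that repays a moment's attention --- the well-definedness of point evaluation on $W$ as a linear map into a coordinate space --- is immediate from the polynomial nature of the elements of $S^e\otimes D^f$, so I would dispose of it in a single line.
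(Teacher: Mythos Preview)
Your proof is correct and follows essentially the same approach as the paper's. The paper's own argument is a terser version of yours: it observes that the $j$-th row of $M$ is the evaluation vector of $\psi(x_0,v_j^f)\in\psi(x_0,V)\subset W$, whence $\rank M\le\dim\psi(x_0,V)\le\dim W$, and the hypothesis forces equality. Your explicit factorization $M^{\mathrm{T}}=\mathrm{ev}\circ B$ simply names the two linear maps underlying that chain of inequalities.
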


\begin{proof}
Since $\psi$ is bilinear $\psi(x_0,v_j^f) = \sum \xi_i \psi(u_i^e,v_j^f)$. Therefore
the $j$-th row  of $M$ contains the values of $\psi(x_0,v_j^f)$ at the 
points $(p_k,q_k)$ for all $k$. Therefore $\rank M \le \dim \psi(x_0,V) \le \dim W$.
If $\rank M = \dim W$ the claim follows. The second claim follows similarly.
\end{proof}

\begin{remark}
Notice the following:
\begin{enumerate}
\item  The rank condition of Proposition \ref{cRank} can also be checked over a finite field. 
\item Over a finite field all possible values of the polynomial $\chi$ can be precomputed and stored
in a table.
\item Since 
$\psi(u^e\otimes v^f)(p,q)$ can be evaluated quickly using Corollary \ref{cEvaluate} we do not have to store the $n^3$ values of this expression used in Proposition \ref{cRank}. It is enough to
store the $2n^2$ entries of $M$ and $N$. This is fortunate since $n$ must be at least $20.000$ for $d=217$ and in this case $n^3 = 8 \times 10^{12}$ values would consume about $8 GB$ of memory. 
\item Given the Polynomial $\chi$ the formula of Corollary \ref{cEvaluate} becomes so simple, that it can easily be implemented in C++. See e.g. our program {\ttfamily nxnxn} at \cite{smallDegree}.
\item Calculating the rank of a $20.000 \times 20.000$-matrix is still difficult and takes several weeks on current computers, if implemented naively. Using FFPACK \cite{FFPACK} we could distribute this work to a cluster of computers. See \cite{smallDegree} program {\ttfamily nxnxn}. For example, the case $d=210$ (the largest we computed) required 23.8 hours total run time on a machine Xeon-E5472-CPU, 8 cores.
\item The algorithm presented here is related to the one presented in \cite{BvB08-2} with the substantial improvement that the elements  of $U$ and $V$ are represented as sums of powers of linear forms and that the elements of $W$ are represented by their values. This eliminates  the need to calculate with big bihomogeneous polynomials.
\end{enumerate}
\end{remark}

\section{The Method of Covariants: Algorithms} \xlabel{sCovariantAlgorithms}

Virtually all the methods for addressing the rationality problem are based on introducing some fibration structure over a
stably rational base in the space for which one wants to prove rationality; with the Double Bundle Method, the fibres are linear, but it turns out
that fibrations with nonlinear fibres can also be useful if rationality of the generic fibre of the fibration over the function field of the base can
be proven. The \emph{Method of Covariants} (see \cite{Shep}) accomplishes this by inner linear projection of the generic fibre from a very
singular centre.
 
\begin{definition}\xlabel{dCovariants}
If $V$ and $W$ are $G$-modules for a linear algebraic group $G$, then a {\sl covariant} $\varphi$ of degree $d$ from $V$ with values in $W$ is a
$G$-equivariant polynomial map of degree $d$
\[
\varphi\, :\, V \to W\, .
\]
In other words, $\varphi$ is an element of $\mathrm{Sym}^d (V^{\vee})\otimes W$.
\end{definition}

The method of covariants phrased in a way that we find useful is contained in the following theorem.

\begin{theorem}\xlabel{tCovariants}
Let $G$ be a connected linear algebraic group the semi-simple part of which is a direct product of groups of type $\mathrm{SL}$ or $\mathrm{Sp}$. Let
$V$ and $W$ be $G$-modules, and suppose that the action of $G$ on $W$ is generically free. Let $Z$ be the ineffectivity kernel of the action of $G$ on
$W$, and assume that the action of $\bar{G} := G/Z$ is generically free on $\PP (W)$, and $Z$ acts trivially on $\PP (V)$.\\
Let 
\[
\varphi\, :\, V \to W
\]
be a (non-zero) covariant of degree $d$. Suppose the following assumptions hold:
\begin{itemize}
\item[(a)]
$\PP (W)/G$ is stably rational of level $\le \dim \PP (V) - \dim \PP (W)$.
\item[(b)]
If we view $\varphi$ as a map $\varphi\, :\, \PP (V) \dasharrow \PP (W)$ and denote by $B$ the base \emph{scheme} of $\varphi$, then there is a linear
subspace $L \subset V$ such that $\PP (L)$ is contained in $B$ together with its full infinitesimal neighbourhood of order $(d-2)$, i.e.
\[
\mathcal{I}_{B} \subset \mathcal{I}_{\PP (L)}^{d-1}\, .
\]
Denote by $\pi_L$ the projection $\pi_L \, : \, \PP (V) \dasharrow \PP (V /L)$ away from $\PP (L)$ to $\PP (V/L)$.
\item[(c)]
Consider the diagram
\xycenter{
	\PP (V) \ar@{-->}[r]^{\varphi} \ar@{-->}[d]^{\pi_{L}}& \PP (W)\\
	\PP (V/L)
	}
and assume that one can find a point $[\bar{p}] \in \PP (V/L)$ such that
\[
	\varphi |_{\mathbb{P} (L+\mathbb{C} p )} \colon \mathbb{P} (L +\mathbb{C} p)  \dasharrow  \PP (W)
\]
is dominant.
\end{itemize}
Then $\PP (V) /G$ is rational.
\end{theorem}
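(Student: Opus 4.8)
The plan is to produce, under hypotheses (a)–(c), an explicit chain of birational equivalences over the base $\PP(W)/G$, using the projection $\pi_L$ to linearize the generic fibre of $\varphi$. First I would analyze the structure of $\varphi$ on a generic fibre. Because $\varphi$ is a covariant of degree $d$, its restriction to the linear slice $\PP(L+\CC p)$ — which is a $\PP^{\dim L}$ after projectivizing, but really the point is to look at what $\varphi$ does along lines through $\PP(L)$ — is governed by the vanishing condition in (b): since $\mathcal{I}_B\subset \mathcal{I}_{\PP(L)}^{d-1}$, restricting the degree-$d$ map to a line joining a point of $\PP(L)$ to a point $[v]\notin\PP(L)$ kills all terms of order $\ge d-1$ in the $\PP(L)$-direction, so along such a line $\varphi$ behaves like a \emph{linear} map in the transverse parameter (more precisely the restriction to $\PP(L+\CC v)\dasharrow\PP(W)$, after blowing up $\PP(L)$, becomes the projectivization of a linear map on the normal directions). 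Hence the fibre of $\pi_L$ over a generic $[\bar p]\in\PP(V/L)$, which is the linear space $\PP(L+\CC p)$, maps to $\PP(W)$ by a map whose indeterminacy is exactly $\PP(L)$ and which, after removing that locus, is \emph{linear}; by hypothesis (c) we may choose $[\bar p]$ so that this linear map is dominant, i.e. surjective onto $\PP(W)$.

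Next I would assemble the global fibration. The pair of maps $(\varphi,\pi_L)$ gives a rational map $\PP(V)\dasharrow \PP(W)\times\PP(V/L)$, $G$-equivariant for the diagonal action ($Z$ acts trivially on $\PP(V)$ and on $\PP(V/L)$, since $Z$ acts trivially on $\PP(V)$ hence on the quotient, and on $\PP(W)$ the action factors through $\bar G$). The content of the linearity observation above is that $\PP(V)$ is birational, over $\PP(V/L)$, to the total space of a projective bundle; concretely, over the open set of $\PP(V/L)$ where the fibrewise linear map $L+\CC p\to W$ is surjective, the map to $\PP(W)\times\PP(V/L)$ realizes $\PP(V)$ as a $\PP^{\,\dim\PP(V)-\dim\PP(W)}$-bundle — the fibre over $([w],[\bar p])$ being the projective space of the preimage of the line $\CC w$ under that linear map, which has dimension $\dim L - \dim W + 1 = \dim\PP(V)-\dim\PP(W)$. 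This is a $G$-vector bundle situation in the sense of Lemma \ref{lNoNameLemma} once we pass to the $\bar G$-action (the $\bar G$-action on $\PP(W)$ is generically free by assumption, so there is an honest almost free action to which the no-name lemma applies); thus, taking quotients,
\[
\PP(V)/\bar G \;\sim\; \bigl(\PP(W)/\bar G\bigr)\times \PP^{\,\dim\PP(V)-\dim\PP(W)}\times(\text{base directions from }\PP(V/L)).
\]
One must track the extra parameters coming from $\PP(V/L)$: these enter rationally and contribute only affine/projective space factors, so they are harmless for rationality. Finally, invoke hypothesis (a): $\PP(W)/G=\PP(W)/\bar G$ is stably rational of level $\le\dim\PP(V)-\dim\PP(W)$, which is exactly the number of free projective parameters produced above, so the product is rational, whence $\PP(V)/G=\PP(V)/\bar G$ is rational.

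The main obstacle I expect is making the ``linearity after projection from $\PP(L)$'' step precise and functorial in $G$ — i.e. checking that after blowing up $\PP(L)\subset\PP(V)$ (equivalently, passing to the affine picture and dividing by a suitable power of a linear coordinate) the resulting fibrewise linear map over $\PP(V/L)$ genuinely defines a $G$-equivariant vector bundle to which Lemma \ref{lNoNameLemma} applies, and that the open locus where it has maximal rank is $G$-invariant and non-empty (this is where (c) is used). A secondary technical point is bookkeeping the torus/homothety actions and the ``stably rational of level $\le k$'' notion so that the count of free $\PP^1$-factors matches $\dim\PP(V)-\dim\PP(W)$ exactly and is absorbed by (a); this is parallel to the Rosenlicht-section and no-name arguments already used in the proof of Theorem \ref{tDoubleBundleOriginal}, so I would model that part on the same technique.
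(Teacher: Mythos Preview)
Your overall strategy --- linearize $\varphi$ on the fibres of $\pi_L$ using (b), exhibit $\PP(V)$ as birationally a projective bundle over $\PP(V/L)\times\PP(W)$, and then conclude by (a) --- matches the paper's. But there is a genuine gap in the passage to the quotient.

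You write that $(\varphi,\pi_L)\colon\PP(V)\dasharrow\PP(W)\times\PP(V/L)$ is ``$G$-equivariant for the diagonal action'' and then invoke Lemma~\ref{lNoNameLemma}. This fails: the subspace $L$ is \emph{not} assumed $G$-invariant (and in the applications, e.g.\ $L_S=x_1^{2n+3}\cdot\CC[x_1,x_2,x_3]_{n-2}$, it certainly is not). Hence there is no $G$-action on $\PP(V/L)$, the projection $\pi_L$ is not equivariant, and the projective-bundle structure you produce over $\PP(V/L)\times\PP(W)$ is not a $G$-vector bundle. The no-name lemma therefore does not apply, and your displayed formula for $\PP(V)/\bar G$ has no meaning as stated, since there is no quotient $(\PP(W)\times\PP(V/L))/\bar G$ to speak of.

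The paper circumvents this exactly here, and this is where the hypothesis on $G$ (semi-simple part a product of $\mathrm{SL}$'s and $\mathrm{Sp}$'s) enters: such $G$ are \emph{special} in the sense of Serre, so $\PP(W)\dasharrow\PP(W)/\bar G$ admits a rational section $\sigma$. One then works fibrewise on $\bar\varphi\colon\PP(V)/\bar G\dasharrow\PP(W)/\bar G$: since $\bar G$ acts generically freely on $\PP(W)$, the generic fibre of $\bar\varphi$ coincides with the generic fibre of $\varphi$, and \emph{on that single fibre} the (non-equivariant) ruled structure via $\pi_L$ over $\PP(V/L)$ shows the fibre is rational of the right dimension. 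Pulling back along $\sigma$ gives $\PP(V)/\bar G\sim(\PP(W)/\bar G)\times\PP^{\dim\PP(V)-\dim\PP(W)}$, and (a) finishes. So the Rosenlicht/section argument you flagged as a ``secondary technical point'' is in fact the main device, replacing the no-name lemma precisely because $\pi_L$ is not equivariant.
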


\begin{proof}
By assumption the group $G$ is \emph{special} (cf. \cite{Se58}), and thus $W \dasharrow W/G$ which is generically a principal $G$-bundle in the \'{e}tale
topology, is a principal bundle in the Zariski topology. Combining this with Rosenlicht's theorem on torus sections \cite{Ros}, we get that the
projection
$\PP (W)
\dasharrow
\PP (W)/G$ has a rational section
$\sigma$. Remark that property (c) implies that the generic fibre of $\pi_L$ maps dominantly to $\PP (W)$ under $\varphi$, which means that the
generic fibre of
$\varphi$ maps dominantly to $\PP  (V/L)$ under $\pi_L$, too. Note also that the map $\varphi$ becomes linear on a fibre $\PP (L +\CC g)$ because of
property (b) and that thus the generic fibre of $\varphi$ is birationally a vector bundle via $\pi_L$ over the base $\PP (V/L)$.  Thus, if we
introduce the graph
\[
	\Gamma = \overline{\{ ([q],[\bar{q}], [f]) \,|\, \pi_L ([q])=[\bar{q}], \varphi ([q])= [f]\} }
	\subset \PP (V) \times \PP\bigl( V /L \bigr) \times \PP (W)
\]
and look at the diagram
\newcommand{\rmpr}{\mathrm{pr}}
\xycenter{
	\Gamma \ar[d]_{\rmpr_{23}} \ar@{<-->}[r]^{1:1}_{\rmpr_{1}}
	& \PP (V) \ar@{-->}[r]
	& \PP (V)/\bar{G} \ar@{-->}[dd]^{\bar{\varphi}}
	\\
	\PP\bigl( V/L \bigr) \times \PP (W) \ar[d]
	\\
	\PP (W) \ar@{-->}[rr]
	& & \PP (W)/\bar{G} \ar@/^20pt/@{-->}[ll]^{\sigma}.
	}
we find that the projection $\rmpr_{23}$ is dominant and makes $\Gamma$ birationally into a vector bundle over $\PP (V/L)\times \PP (W)$. Hence $\Gamma$
is birational to a succession of vector bundles over $\PP (W)$ or has a \emph{ruled structure} over $\PP (W)$. Since $\bar{G}$ acts generically freely
on $\PP (W)$, the generic fibres of $\varphi$ and $\bar{\varphi}$ can be identified and we can pull back this ruled structure via $\sigma$ (possibly
replacing $\sigma$ by a suitable translate).  Hence  
$\PP (V) /\bar{G}$ is birational to $\PP (W) /\bar{G} \times \PP^N$ with $N = \dim \PP (V) - \dim \PP (W)$. Thus by property (a), $\PP (V)/G$ is
rational.
\end{proof}

In \cite{Shep} essentially this method is used to prove the rationality of the moduli spaces of plane curves of degrees $d\equiv 1$ (mod $9$), $d\ge
19$. In \cite{BvB08-1} it is the basis of the proof that for $d\equiv 1$ (mod $3$), $d \ge 37$, and $d\equiv 2$ (mod $3$), $d\ge 65$, these moduli
spaces are rational. We improve these bounds here substantially and now recall the results from \cite{BvB08-1} which we use in our algorithms.\\
In that paper we used Theorem \ref{tCovariants} with the following data: $G$ is $\mathrm{SL}_3 (\CC )$ throughout.
\begin{itemize}
\item
For $d= 3n+1$, $n\in\mathbb{N}$, and $V= V(0, \: d)= \mathrm{Sym}^d (\CC^3)^{\vee }$, we take $W = V(0,\: 4)$ and produce covariants
\[
S_d\, :\, V(0,\: d)  \to V(0, \: 4)
\]
of degree $4$. We show that property (b) of Theorem \ref{tCovariants} holds for the space
\[
L_S = x_1^{2n+3} \cdot \CC[x_1,x_2,x_3]_{n-2} \subset V(0, \: d)\, .
\]
Moreover, $\PP (V(0,\: 4))/G$ is stably rational of level $8$. So for particular values of $d$, it suffices to check property (c) by explicit
computation. We give the details how this is done below.
\item
For $d= 3n+2$, $n\in\mathbb{N}$, and $V= V(0, \: d)= \mathrm{Sym}^d (\CC^3)^{\vee }$, we take $W = V(0,\: 8)$ and produce covariants
\[
T_d\, :\, V(0,\: d)  \to V(0, \: 8)
\]
again of degree $4$. In this case, property (b) of Theorem \ref{tCovariants} can be shown to be true for the subspace
\[
L_T = x_1^{2n+5} \cdot \CC[x_1,x_2,x_3]_{n-3} \subset V(0, \: d)\, .
\]
$\PP (V(0,\: 8))/G$ is stably rational of level $8$, too, hence again everything comes down to checking 
property (c) of Theorem \ref{tCovariants}. 
\end{itemize}

We recall from \cite{BvB08-1} how some elements of $L_S$ (resp. $L_T$) can be written as sums of powers of linear forms which is very useful for
evaluating $S_d$ resp. $T_d$ easily. Let $K$ be a positive integer.
\begin{definition}
Let $\mathbf{b} = (b_1,\dots , b_K)\in\CC^K$ be given. Then 
we denote by 
\begin{gather}
p_i^{\mathbf{b}}(c) := \prod_{\stackrel{j\neq i}{1 \le j\le K}} \frac{c-b_j}{b_i-b_j}
\end{gather}
for $i=1,\dots , K$ 
the interpolation polynomials of degree $K-1$ w.r.t. $\mathbf{b}$ in the one variable $c$. 
\end{definition}

Then we have the following easy Lemma (see \cite{BvB08-1}, Lemma 5.2, for a proof)

\begin{lemma} \xlabel{lConstruction}
Let $\mathbf{b} = (b_1,\dots , b_K)\in\CC^K$, $b_i\neq b_j$ 
for $i\not=j$, and set 
$x=x_1$, $y=\lambda x_2 +\mu x_3$, $(\lambda,\: \mu )\neq (0,\: 0)$. Suppose $d > K$ and 
put $l_i:= b_i x +y$. Then for each $c\in\CC$ with $c\neq b_i$, $\forall i$,
\begin{gather}
f(c) = p_1^{\mathbf{b}}(c)l_1^d +\dots + p_K^{\mathbf{b}}(c) l_K^d - (cx + y)^d
\end{gather}
is nonzero and 
divisible by $x^K$.
\end{lemma}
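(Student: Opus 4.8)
The plan is to prove Lemma \ref{lConstruction} by exhibiting $f(c)$ as the classical Newton divided-difference remainder of Lagrange interpolation, and then reading off the divisibility by $x^K$ from the fact that the interpolation nodes all lie on the line $x_1 = 0$. Concretely, I would first restrict attention to the substitution $x = x_1$, $y = \lambda x_2 + \mu x_3$, so that every relevant polynomial lies in $\CC[x, y]$; since $d > K$, a homogeneous polynomial of degree $d$ in $x, y$ is divisible by $x^K$ precisely when, written as $\sum_{a+b=d} c_{ab} x^a y^b$, all coefficients with $a < K$ vanish, equivalently when its restriction to the affine chart $y = 1$ — a genuine one-variable polynomial $g(c) = f(c)|_{x = c, y = 1}$ of degree $\le d$ — has a zero of order $\ge K$ at $x = 0$; but it is cleaner to work directly with the one-variable function $c \mapsto (cx+y)^d$ and use that $p_1^{\mathbf b}, \dots, p_K^{\mathbf b}$ are the Lagrange interpolation polynomials at the nodes $b_1, \dots, b_K$.

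The key step is the following identity: for the polynomial (in the auxiliary variable $c$) $P(c) := (cx + y)^d$, with $x, y$ treated as scalars, the Lagrange interpolant of $P$ at the nodes $b_1, \dots, b_K$ is exactly $\sum_{i=1}^K p_i^{\mathbf b}(c)\, l_i^d$ with $l_i = b_i x + y$, because $P(b_i) = (b_i x + y)^d = l_i^d$. Hence $f(c) = \bigl(\text{Lagrange interpolant of } P\bigr)(c) - P(c)$ is the interpolation error, and the classical Newton form of the remainder gives
\begin{gather*}
f(c) = -\,[b_1, \dots, b_K, c]\,P \cdot \prod_{i=1}^K (c - b_i),
\end{gather*}
where $[b_1, \dots, b_K, c]P$ denotes the divided difference of $P$ on those $K+1$ nodes. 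Now $[b_1, \dots, b_K, c]P$, as a function of the scalars $x, y$, is a symmetric combination of the values $P(b_i) = (b_i x + y)^d$ and $P(c) = (cx+y)^d$; expanding each $(tx+y)^d = \sum_{a} \binom{d}{a} t^a x^a y^{d-a}$ and using that the divided difference annihilates polynomials in $t$ of degree $< K$ (it is a $K$-th order difference operator), one finds that only the terms with $a \ge K$ survive, so $[b_1, \dots, b_K, c]P$ is divisible by $x^K$ as a polynomial in $x, y$. Therefore $f(c)$ is divisible by $x^K = x_1^K$.

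Finally I would address nonvanishing: since $c \neq b_i$ for all $i$, the factor $\prod (c - b_i)$ is a nonzero scalar, and the divided difference $[b_1, \dots, b_K, c]P$ is not identically zero as a polynomial in $x, y$ because its leading contribution comes from the $a = d$ term, namely $\binom{d}{d}\, x^d \cdot [b_1, \dots, b_K, c](t^d) = x^d \cdot h_{d-K}(b_1, \dots, b_K, c)$, a complete homogeneous symmetric polynomial of positive degree $d - K \ge 1$ in the distinct values $b_1, \dots, b_K, c$, which does not vanish identically. Hence $f(c) \neq 0$ and the lemma follows. The main obstacle is the bookkeeping in the divided-difference identity — specifically verifying that a $K$-th divided difference kills all monomials $t^a$ with $a < K$ and extracting the $x^K$ factor cleanly — but this is the standard theory of finite differences applied coefficientwise in $x$ and $y$, so no real difficulty arises; alternatively one can bypass divided differences entirely by induction on $K$, subtracting off one node at a time and using that $(cx+y)^d - (b_K x + y)^d$ is divisible by $c - b_K$ with quotient of the same shape in degree $d - 1$, which is essentially the proof referenced in \cite{BvB08-1}, Lemma 5.2.
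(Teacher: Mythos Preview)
The paper does not actually prove this lemma here; it simply cites \cite{BvB08-1}, Lemma~5.2. So there is no in-paper argument to compare against, and your divided-difference approach is a perfectly good independent proof of both claims. Recasting $f(c)$ as the Lagrange remainder for $P(t)=(tx+y)^d$ and then expanding binomially so that the $K$-th divided difference kills all $t^a$ with $a<K$ is clean and makes the $x^K$-divisibility transparent; the alternative inductive argument you allude to at the end (peeling off one node at a time, noting $(cx+y)^d-(b_Kx+y)^d$ is divisible by $(c-b_K)x$) is closer in spirit to what is done in \cite{BvB08-1}.

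There is, however, a small but genuine gap in your nonvanishing argument. You look at the top term $a=d$ and assert that $h_{d-K}(b_1,\dots,b_K,c)$ ``does not vanish identically.'' But $b_1,\dots,b_K,c$ are fixed complex numbers here, not indeterminates, and a complete homogeneous symmetric polynomial of positive degree can certainly vanish at a specific tuple of distinct values (e.g.\ $h_1(1,-1)=0$). So this does not establish $f(c)\neq 0$. The fix is immediate: look instead at the bottom surviving term $a=K$. Since the $K$-th divided difference of $t^K$ is its leading coefficient, namely $1$, the coefficient of $x^K y^{d-K}$ in $[b_1,\dots,b_K,c]P$ equals $\binom{d}{K}\neq 0$; and $y^{d-K}=(\lambda x_2+\mu x_3)^{d-K}$ is a nonzero polynomial because $(\lambda,\mu)\neq(0,0)$ and $d>K$. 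Together with $\prod_i(c-b_i)\neq 0$ this gives $f(c)\neq 0$. With this correction your proof is complete.
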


So for $K=2n+3$ we obtain elements in $f(c) \in L_S$ and for $K=2n+5$ elements $f(c)\in L_T$. We now check property (c) of Theorem \ref{tCovariants}
computationally in the following way. We choose a fixed $g \in V(0, \: d)$ which we write as a sum of powers of linear forms
\[
g = m_1^d +\dots + m_{\mathrm{const}}^d
\]
where $\mathrm{const}$ is a positive integer. We choose a random vector $\mathbf{b}$, random $\lambda$ and $\mu$, and a random $c$, and use formula
(30) from \cite{BvB08-1} which reads
\begin{align*}
S_d (f(c) + g) &= 
                        	 \sum_{i,j,k,p} p^{\mathbf{b}}_i(c)I(l_i,\: m_j, \: m_k, \: m_p)^n  l_i m_jm_km_p 
			\nonumber \\
	               	&
			\quad\quad 
			+S_d(-(cx+y)^d +g)
\end{align*}
to evaluate $S_d$. Here $I$ is a function on quadruples of linear forms to $\CC$: if in coordinates 
\begin{gather*}
L_{\alpha } = \alpha_1 x_1 + \alpha_2 x_2 + \alpha_3 x_3
\end{gather*}
and $L_{\beta }$, $L_{\gamma }$, $L_{\delta}$ are linear forms defined analogously, and if we moreover abbreviate
\[
(\alpha \: \beta \: \gamma ) := \det \left( \begin{array}{ccc}
\alpha_1 & \alpha_2 & \alpha_3\\
\beta_1 &\beta_2 &\beta_3\\
\gamma_1 &\gamma_2 &\gamma_3
\end{array}\right) \quad \mathrm{etc.,}
\]
as in the symbolic method of Aronhold and Clebsch \cite{G-Y}, then
\[
I( L_{\alpha}, \: L_{\beta}, \: L_{\gamma}, \: L_{\delta}):= (\alpha \beta \gamma ) (\alpha \beta \delta )( \alpha \gamma\delta )(\beta\gamma\delta )\,
.
\]
For $T_d$ we have by an entirely analogous computation 
\begin{align}
T_d (f(c) + g) &= 
                        	 \sum_{i,j,k,p} p^{\mathbf{b}}_i(c)I(l_i,\: m_j, \: m_k, \: m_p)^n  l_i^2 m_j^2m_k^2m_p^2 
			\nonumber \\
	               	&
			\quad\quad 
			+T_d(-(cx+y)^d +g)
\end{align}
So we can evaluate $T_d$ similarly. Thus for each particular value of $d$ we can produce points in $\PP (V(0, \:4))$, for $d=3n+1$, or $\PP (V(0, \:
8))$, for $d=3n+2$, which are in the image of the restriction of $S_d$ to a fibre of $\pi_{L_S}$ resp. in the image of the restriction of $T_d$ to a
fibre of $\pi_{L_T}$. We then check that these span $\PP (V(0, \: 4))$ resp. $\PP (V(0, \: 8))$ to check condition (c) of Theorem \ref{tCovariants}.

\section{Applications to Moduli of Plane Curves} \xlabel{sApplications}

The results on the moduli spaces of plane curves $C(d)$ of degree $d$ that we obtain are described below. We organize them according to the method
employed.

\

\bf{Double Bundle Method.}\;\mdseries As we mentioned above, Katsylo obtained in \cite{Kat89} the rationality of $C(d)$, $d\equiv 0$ (mod $3$) and $d
\ge 210$. Using the computational scheme of Section \ref{sDoubleBundleAlgorithms} and our program {\ttfamily nxnxn} at \cite{smallDegree}, we obtain the rationality of all $C(d)$ with $d\equiv 0$ (mod $3$) and $d \ge 30$ except $d=48,54,69$.
Moreover, we obtain rationality for $d=10$ and $d=21$ (the latter was known before, since by the results of \cite{Shep}, $C(d)$ is rational
for $d\equiv 1$ (mod $4$)).  A table of $U$, $V$ and $W$ used in each case can be found at \cite{smallDegree}, {\ttfamily UVW.html}. We found these combinatorially using our program {\ttfamily alldimensions2.m2} at\cite{smallDegree}.  

For $d=69$ the result is known by \cite{Shep} since $69 \equiv 1$ (mod $4$). For the cases $d=27$ and $d=54$ we need more special $U$, $V$, $W$ and use the methods from our article \cite{BvB08-2}.

\

\
\bf{The case $d=27$.}\;\mdseries We establish the rationality of $C(27)$ as follows: there is a bilinear, $\mathrm{SL}_3 (\CC )$-equivariant map
\[
\psi\, :\, V(0,\: 27) \times (V(11, \: 2)\oplus V(15, \: 0)) \to V(2, \: 14)
\]
and
\begin{gather*}
\dim V(0,\: 27) = 406, \;  \dim V(11, \: 2)= 270,\\
\dim V(15,\: 0)= 136, \; \dim V(2, \: 14) = 405\, .
\end{gather*}
We compute $\psi$ by the method of \cite{BvB08-2} and find that 
$\psi = \omega^2\beta^{11} \oplus \beta^{13}$ 
in the notation of that article. For a random $x_0 \in V(0,\: 27)$, the kernel of $\psi (x_0, \cdot )$ turns out to be
one-dimensional, generated by $y_0$ say, and $\psi (\cdot , \: y_0)$ has likewise one-dimensional kernel generated by $x_0$ (See\cite{smallDegree}, {\ttfamily degree27.m2} for a Macaulay script doing this calculation). It follows that
the map induced
by $\psi$
\[
\PP (V(0, \: 27)) \dasharrow \PP (V(11, \: 2)\oplus V(15, \: 0))
\]
is birational, and it is sufficient to prove rationality of $\PP (V(11, \: 2)\oplus V(15, \: 0))/\mathrm{SL}_3 (\CC )$. But $\PP (V(11, \: 2)\oplus
V(15, \: 0))$ is birationally a vector bundle over $\PP (V(15, \: 0))$, and $\PP (V(15, \: 0))/\mathrm{SL}_3 (\CC )$ is stably rational of level $19$,
so $\PP (V(11, \: 2)\oplus V(15, \: 0))/\mathrm{SL}_3 (\CC )$ is rational by the no-name lemma \ref{lNoNameLemma}.

\begin{table} 
\begin{tabular}{|c||l|} \hline
Degree $d$ of curves  & Result and method of proof/reference \\ \hline \hline
1 & rational (trivial) \\ \hline
2 & rational (trivial) \\ \hline
3 & rational (moduli space affine $j$-line) \\ \hline
4 & rational, \cite{Kat92/2}, \cite{Kat96}  \\ \hline
5 & rational, two-form trick \cite{Shep} \\ \hline
6 & rationality unknown \\ \hline
7 & rationality unknown \\ \hline
8 & rationality unknown \\ \hline
9 & rational, two-form trick \cite{Shep} \\ \hline
10 & rational, double bundle method, this article \\ \hline
11 & rationality unknown \\ \hline
12 & rationality unknown \\ \hline
13 & rational, two-form trick \cite{Shep}  \\ \hline
14 & rationality unknown \\ \hline
15 & rationality unknown  \\ \hline
16 & rationality unknown  \\ \hline
17 & rational, two-form trick \cite{Shep}  \\ \hline
18 & rationality unknown  \\ \hline
19 & Covariants, \cite{Shep} and this article\\ \hline
20 & rationality unknown \\ \hline
21 & rational, two-form trick \cite{Shep}  \\ \hline
22 & Covariants, this article\\ \hline
23 & rationality unknown \\ \hline
24 & rationality unknown \\ \hline
25 & rational, two-form trick \cite{Shep}  \\ \hline
26 & rationality unknown\\ \hline
27 & rational, this article (method cf. above) \\ \hline
28 & Covariants, \cite{Shep} and this article \\ \hline
29 & rational, two-form trick \cite{Shep}  \\ \hline
30 & double bundle method, this article\\ \hline
31 & Covariants, this article\\ \hline
32 & rationality unknown\\ \hline
$\ge 33$ (excl. 48) & rational, this article, \cite{BvB08-1}, \cite{Kat89} \\ \hline
\end{tabular}
\caption{Table of known rationality results for $C(d)$} \label{tKnownResults}
\end{table}

\bf{The case $d=54$.}\;\mdseries We establish the rationality of $C(54)$ as follows: there is a bilinear, $\mathrm{SL}_3 (\CC )$-equivariant map
\[
\psi \colon V(0,54) \times \bigl(
V(11,8) \oplus V(6,3) \oplus V(5,2) \oplus V(3,0) 
\bigr) \to V(0,51)
\]
with
\begin{gather*}
\dim V(0,\: 54) = 1540, \;  \dim V(11, \: 8)= 1134, \dim V(6,\: 3)= 154, \\
\; \dim V(5, \: 2) = 81, \dim V(3,\: 0) = 10, \dim V(0,51) = 1378
\end{gather*}
Since $1134+154+81+10 = 1379 = 1378+1$ and $1540-1379 > 19$ we only need to check the genericity condition of 
Theorem \ref{tDoubleBundleOriginal} to prove rationality. For this
we compute $\psi$ by the method of \cite{BvB08-2} and find that 
$\psi = \beta^{11} \oplus \beta^{6} \oplus \beta^{5} \oplus \beta^{3}$ 
in the notation of that article. 

For a random $x_0 \in V(0,\: 54)$, the kernel of $\psi (x_0, \cdot )$ turns out to be
one-dimensional, generated by $y_0$ say, and $\psi (\cdot , \: y_0)$ has full rank  $1378$
and therefore $\psi(V(0,54),\: y_0) = V(0,51)$ as required. See \cite{smallDegree}, {\ttfamily degree54.m2} for a Macaulay script doing this calculation.

\
 
\bf{Method of Covariants.}\;\mdseries According to \cite{BvB08-1}, $C(d)$ is rational for $d\equiv 1$ (mod $3$), $d\ge 37$, and $d\equiv 2$ (mod
$3$), $d\ge 65$ (for $d\equiv 1$ (mod $9$), $d \ge 19$, rationality was proven before in \cite{Shep}). By the method of Section \ref{sCovariantAlgorithms}, we improve this
and obtain that $C(d)$ is rational for $d\equiv 1$ (mod $3$), $d\ge 19$, which uses the covariants $S_d$ of Section \ref{sCovariantAlgorithms}, and rational for  $d\equiv 2$
(mod $3$), $d\ge 35$, which uses the family of covariants $T_d$ of Section \ref{sCovariantAlgorithms}. See \cite{smallDegree}, {\ttfamily interpolation.m2} for a Macaulay Script doing this calculation.

\

Combining what was said above with the known rationality results for $C(d)$ for small values of $d$, we can summarize the current knowledge in Table \ref{tKnownResults}. Thus we obtain our main theorem:

\begin{theorem}\xlabel{tComprehensive}
The moduli space $C(d)$ of plane curves of degree $d$ is rational except possibly for one of the values in the following list: \[ d= 6, \: 7, \: 8, \:
11, \: 12,\:  14,\: 15,\: 16,\: 18,\: 20,\: 23,\: 24,\: 26,\: 32,\: 48 \, . \]
\end{theorem}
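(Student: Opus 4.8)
The plan is to prove the theorem by a case analysis on the residue of $d$ modulo $3$: in each residue class I apply the method of the section best suited to it, together with the already-known results, and at the end I verify that the union of the degrees left untreated is precisely the list in the statement, recording the whole bookkeeping in Table~\ref{tKnownResults}. The smallest degrees and those settled in the literature are quoted rather than reproved: $C(1),C(2),C(3)$ are rational for elementary reasons, $C(4)$ by \cite{Kat92/2},\cite{Kat96}, and $C(d)$ for $d\equiv 1\pmod 4$ by the two-form trick of \cite{Shep}, which covers $d=5,9,13,17,21,25,29$ among the small values.

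For $d\equiv 1\pmod 3$, write $d=3n+1$ and apply Theorem~\ref{tCovariants} with $G=\mathrm{SL}_3(\CC)$ (which is special and of type $\mathrm{SL}$, so the structural hypotheses hold), $V=V(0,d)$, $W=V(0,4)$, the degree-four covariant $S_d$, and $L=L_S=x_1^{2n+3}\cdot\CC[x_1,x_2,x_3]_{n-2}$: properties (a) and (b) are supplied by \cite{BvB08-1} ($\PP(V(0,4))/G$ is stably rational of level $8\le\dim\PP(V)-\dim\PP(W)$ for the relevant $d$), and property (c) is checked for each $d\ge 19$ by evaluating $S_d$ on the interpolation families $f(c)+g$ of Lemma~\ref{lConstruction} through the closed formula for $S_d(f(c)+g)$, using the program {\ttfamily interpolation.m2}. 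Together with the rationality of $C(10)$ (double bundle, this article) this leaves $d=7,16$ open in this class. Symmetrically, for $d=3n+2$ one takes $W=V(0,8)$, the covariant $T_d$, and $L_T=x_1^{2n+5}\cdot\CC[x_1,x_2,x_3]_{n-3}$, obtaining rationality for all $d\ge 35$; with $d=2,5,17,29$ already handled, the open cases become $d=8,11,14,20,23,26,32$.

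For $d\equiv 0\pmod 3$ the workhorse is Theorem~\ref{tDoubleBundleOriginal}: Katsylo's inductive argument in \cite{Kat89} covers $d\ge 210$, and for $30\le d<210$ I would, for each such $d$, search combinatorially (program {\ttfamily alldimensions2.m2}) for modules $U=V(e,0)$, $V=V(0,d)$, $W=V(e-i_1,d-i_1)\oplus\cdots\oplus V(e-i_m,d-i_m)$ with $\dim U-\dim W=1$ and the slack inequality $\dim V-\dim U>\dim K$ for a suitable auxiliary module $K$ with $K/G$ rational, and then certify the genericity hypotheses $\psi(x_0,V)=W$ and $\psi(V,y_0)=W$ by the rank criterion of Proposition~\ref{cRank}, evaluating $\psi$ pointwise via Corollary~\ref{cEvaluate} and computing the ranks of the (very large) matrices $M$, $N$ over a finite field with FFPACK \cite{FFPACK}, distributed over a cluster. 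This yields rationality of $C(d)$ for all $d\equiv 0\pmod 3$ with $30\le d<210$ except $d=48,54,69$; of these, $d=69$ follows from \cite{Shep} since $69\equiv 1\pmod 4$, while $d=27$ and $d=54$ are treated by exhibiting explicitly, by the method of \cite{BvB08-2}, an $\mathrm{SL}_3(\CC)$-equivariant bilinear map from $V(0,d)$ times a reducible module $U$ onto $V(2,14)$ resp.\ $V(0,51)$, checking that for random $x_0$ the induced map $\PP(V(0,d))\dasharrow\PP(U)$ is birational (for $d=27$) or at least that $\psi(V(0,54),y_0)=V(0,51)$ (for $d=54$), and then reducing via Lemma~\ref{lNoNameLemma} to the rationality of a projective quotient that is a vector bundle over a lower-dimensional quotient of the required level. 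This leaves $d=6,12,15,18,24,48$ open in this class. Collecting the three lists $\{7,16\}$, $\{8,11,14,20,23,26,32\}$, $\{6,12,15,18,24,48\}$ gives exactly $\{6,7,8,11,12,14,15,16,18,20,23,24,26,32,48\}$, which is the assertion.

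The main obstacle is the double bundle step for $d\equiv 0\pmod 3$: the matrices whose rank must be certified have size in the tens of thousands, so the pointwise representation of $W$ from Corollary~\ref{cEvaluate}, the finite-field linear algebra of FFPACK, and its parallelization are all essential, and without them the computation is infeasible. A secondary subtlety is that the genericity check is performed in positive characteristic, so one must invoke upper semicontinuity of the rank to deduce that the required surjectivity also holds over $\CC$. Finally, the combinatorial search for admissible $(U,V,W)$, routine in principle, is the one place where a suitable configuration may simply fail to exist within the explored range of weights — which is what happens for $d=48$, accounting for its remaining on the list.
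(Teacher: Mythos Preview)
Your proposal is correct and follows essentially the same approach as the paper: a case analysis by residue of $d$ modulo $3$, using the covariants $S_d$, $T_d$ with Theorem~\ref{tCovariants} for $d\equiv 1,2\pmod 3$, the double bundle method (Theorem~\ref{tDoubleBundleOriginal}) certified via Proposition~\ref{cRank} and Corollary~\ref{cEvaluate} for $d\equiv 0\pmod 3$, the ad hoc constructions for $d=27,54$, and the literature for the remaining small degrees, with the bookkeeping matching Table~\ref{tKnownResults}. One minor slip: the genericity hypothesis in Theorem~\ref{tDoubleBundleOriginal} reads $\psi(x_0,U)=W$ (not $\psi(x_0,V)=W$), since $(x_0,y_0)\in V\times U$; and note that for the covariant method only finitely many $d$ below the bounds of \cite{BvB08-1} actually require the computer check.
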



\begin{thebibliography}{999999999999}

\bibitem[Bogo79]{Bogo79}
Bogomolov, F.A., \emph{Holomorphic tensors and vector bundles on projective varieties}, Math. USSR Izvestija, Vol. \textbf{13}, no. 3 (1979),
499-555

\bibitem[Bogo1]{Bogo1}
Bogomolov, F., \emph{Stable rationality of quotient varieties by simply connected groups}, Mat. Sbornik \textbf{130} (1986), 3-17

\bibitem[Bogo2]{Bogo2}
Bogomolov, F., \emph{Rationality of the moduli of hyperelliptic curves of arbitrary genus}, (Conf. Alg. Geom., Vancouver 1984),
CMS Conf. Proceedings vol. \textbf{6}, Amer. Math. Soc., Providence, R.I. (1986), 17-37

\bibitem[Bo-Ka]{Bo-Ka} Bogomolov, F. \& Katsylo, P., \emph{Rationality of some quotient varieties}, Mat. Sbornik \textbf{126}
(1985), 584-589

\bibitem[BvB08-1]{BvB08-1}
B\"ohning, Chr. \& Graf\,v.\,Bothmer, H.-Chr., \emph{The rationality of the moduli spaces of plane curves of sufficiently large degree}, preprint
(2008), arXiv:0804.1503

\bibitem[BvB08-1a]{rationalityScript}
B\"ohning, Chr. \& Graf\,v.\,Bothmer, H.-Chr.,
\newblock Macaulay2 scripts to check the surjectivity of the Scorza and Octa maps.
\newblock Available at
  \href{http://www.uni-math.gwdg.de/bothmer/rationality}{http://www.uni-math.gwdg.de/bothmer/rationality}, 2008.

\bibitem[BvB08-2]{BvB08-2}
B\"ohning, Chr. \& Graf\,v.\,Bothmer, H.-Chr., \emph{A Clebsch-Gordan formula for $\mathrm{SL}_3 (\CC )$ and applications to rationality}, preprint
(2008), arXiv:0812.3278

\bibitem[BvB08-2a]{ClebschGordanScript}
B\"ohning, Chr. \& Graf\,v.\,Bothmer, H.-Chr.,
\newblock Macaulay2 scripts for ''A Clebsch-Gordan formula for $\mathrm{SL}_3 (\CC )$ and applications to rationality''.
\newblock Available at
  \href{http://www.uni-math.gwdg.de/bothmer/ClebschGordan}{http://www.uni-math.gwdg.de/bothmer/ClebschGordan}, 2008.

\bibitem[BvBK09]{smallDegree}
B\"ohning, Chr. \& Graf\,v.\,Bothmer, H.-Chr., Kr\"oker, J.
\newblock Macaulay2 scripts for ''Rationality of moduli spaces of plane curves of small degree''.
\newblock Available at
  \href{http://www.uni-math.gwdg.de/bothmer/smallDegree}{http://www.uni-math.gwdg.de/bothmer/smallDegree}, 2008.


\bibitem[Ch-G-R]{Ch-G-R}
Chernousov, V., Gille, Ph., Reichstein, Z., \emph{Resolving $G$-torsors by abelian base extensions}, J. Algebra \textbf{296} (2006), 561-581

\bibitem[CT-S]{CT-S}
Colliot-Th{\'e}l{\`e}ne, J.-L. \& Sansuc, J.-J., \emph{
The rationality problem for fields of invariants under linear algebraic groups (with special regards to the Brauer group)}. Algebraic groups and homogeneous spaces, 113--186, 
Tata Inst. Fund. Res. Stud. Math., Tata Inst. Fund. Res., Mumbai, 2007. 

\bibitem[Dol0]{Dol0}
Dolgachev, I., \emph{Rationality of Fields of Invariants}, Proceedings of Symposia in Pure Mathematics vol. \textbf{46}
(1987), 3-16

\bibitem[DGGP]{FFPACK}
Dumas, J.-G., Gautier, T., Giorgi, P., Pernet, C., \emph{FFLAS-FFPACK finite field linear algebra subroutines/package}.
Available at
  \href{http://ljk.imag.fr/membres/Jean-Guillaume.Dumas/FFLAS}{http://ljk.imag.fr/membres/Jean-Guillaume.Dumas/FFLAS}, 2008.

\bibitem[G-Y]{G-Y}
Grace, J.H. \& Young, W.H., \emph{The Algebra of Invariants}, Cambridge Univ. Press (1903); reprinted by Chelsea
Publ. Co. New York (1965)

\bibitem[Kat83]{Kat83}
Katsylo, P.I., \emph{Rationality of orbit spaces of irreducible representations of $\mathrm{SL}_2$}, Izv. Akad. Nauk SSSR, Ser. Mat. \textbf{47}, No. 1
(1983), 26-36; English Transl.: Math USSR Izv. \textbf{22} (1984), 23-32 

\bibitem[Kat84]{Kat84}
Katsylo, P.I., \emph{Rationality of the moduli spaces of hyperelliptic curves}, Izv. Akad. Nauk SSSR Ser. Mat. \textbf{48}
(1984), 705-710

\bibitem[Kat89]{Kat89}
Katsylo, P.I., \emph{Rationality of moduli varieties of plane curves of degree $3k$}, Math. USSR Sbornik, Vol. \textbf{64}, no. 2 (1989)

\bibitem[Kat91]{Kat91}
Katsylo, P.I., \emph{Birational geometry of moduli varieties of vector bundles over $\PP^2$}, Math. USSR Izvestiya \textbf{38} (1992) no. 2,
419-428

\bibitem[Kat92/1]{Kat92/1}
Katsylo, P. I., \emph{Rationality of the moduli variety of curves of genus $5$}, Math. USSR Sbornik, vol.
\textbf{72} (1992), no. 2, 439-445

\bibitem[Kat92/2]{Kat92/2}
Katsylo, P.I., \emph{On the birational geometry  of the space of ternary quartics}, Advances in Soviet Math.
\textbf{8} (1992), 95-103 

\bibitem[Kat94]{Kat94}
Katsylo, P.I., \emph{On the birational geometry of $(\PP^n)^{(m)}/\mathrm{GL}_{n+1}$}, Max-Planck Institut Preprint, MPI/94-\textbf{144} (1994)

\bibitem[Kat96]{Kat96}
Katsylo, P.I., \emph{Rationality of the moduli variety of curves of genus $3$}, Comment. Math. Helvetici
\textbf{71} (1996), 507-524

\bibitem[Ros]{Ros}
Rosenlicht, M., \emph{Some basic theorems on algebraic groups}, American Journal of Mathematics \textbf{78} no. 2, 401-443

\bibitem[Se58]{Se58}
Serre, J.-P., \emph{Espaces fibr\'{e}s alg\'{e}briques}, S\'{e}minaire Claude Chevalley, tome \textbf{3} (1958), exp. no. 1, 1-37

\bibitem[Shep]{Shep} 
Shepherd-Barron, N.I., \emph{The rationality of some moduli spaces of plane curves}, Compositio Mathematica \textbf{67} (1988),
51-88

\bibitem[Shep89]{Shep89}
Shepherd-Barron, N.I., \emph{Rationality of moduli spaces via invariant theory}, Topological Methods in Algebraic Transformation Groups (New
Brunswick, NJ, 1988), Progr. Math., vol. \textbf{80}, Birkh\"auser Boston, Boston, MA (1989), 153-164

\end{thebibliography}
\end{document}